\numberwithin{equation}{section}
\numberwithin{figure}{section}
\theoremstyle{plain}
\newtheorem{thm}{\protect\theoremname}[section]
\theoremstyle{definition}
\newtheorem{defn}[thm]{\protect\definitionname}
\theoremstyle{remark}
\newtheorem{rem}[thm]{\protect\remarkname}
\theoremstyle{plain}
\newtheorem{prop}[thm]{\protect\propositionname}
\theoremstyle{remark}
\newtheorem*{claim*}{\protect\claimname}
\theoremstyle{plain}
\newtheorem{cor}[thm]{\protect\corollaryname}
\theoremstyle{remark}
\newtheorem{notation}[thm]{\protect\notationname}
\theoremstyle{definition}
\newtheorem*{example*}{\protect\examplename}
\theoremstyle{plain}
\newtheorem{lem}[thm]{\protect\lemmaname}
\numberwithin{equation}{section}
\providecommand{\corollaryname}{Corollary}
\providecommand{\propositionname}{Proposition}
\providecommand{\theoremname}{Theorem}
\providecommand{\definitionname}{Definition}
\providecommand{\lemmaname}{Lemma}
\providecommand{\remarkname}{Remark}
\newcommand{\Z}{\mathbb{Z} }
\providecommand{\claimname}{Claim}
\providecommand{\corollaryname}{Corollary}
\providecommand{\definitionname}{Definition}
\providecommand{\examplename}{Example}
\providecommand{\lemmaname}{Lemma}
\providecommand{\notationname}{Notation}
\providecommand{\propositionname}{Proposition}
\providecommand{\remarkname}{Remark}
\providecommand{\theoremname}{Theorem}
\begin{document}
\selectlanguage{british}%
\global\long\def\Hom{{\rm Hom}}%
\global\long\def\End{{\rm End}}%
\global\long\def\Aut{{\rm Aut}}%
\global\long\def\Hol{{\rm Hol}}%
\global\long\def\Ann{{\rm Ann}}%
\global\long\def\Soc{{\rm Soc}}%
\global\long\def\Fix{{\rm Fix}}%
\global\long\def\bbZ{{\rm \mathbb{Z}}}%

\title{\selectlanguage{british}%
On Modular maximal-cyclic braces}
\author{\selectlanguage{british}%
Arpan Das}
\address{\selectlanguage{british}%
Harish-Chandra Research Institute, A CI of Homi Bhabha National Institute,
Chhatnag Road, Jhunsi, Prayagraj-211 019, India}
\email{\selectlanguage{british}%
arpandas@hri.res.in}
\author{\selectlanguage{british}%
Arpan Kanrar}
\address{\selectlanguage{british}%
Harish-Chandra Research Institute, A CI of Homi Bhabha National Institute,
Chhatnag Road, Jhunsi, Prayagraj-211 019, India}
\email{\selectlanguage{british}%
arpankanrar@hri.res.in}
\keywords{\selectlanguage{british}%
Braces; Holomorphs}
\subjclass[2020]{\selectlanguage{british}%
16T25, 20H25,20K30,20D15}
\selectlanguage{english}%
\begin{abstract}
Inspired by a conjecture by Guarnieri and Vendramin concerning the
number of braces with a generalized quaternion adjoint group, many
researchers have studied braces whose adjoint group is a non-abelian
$2$-group with a cyclic subgroup of index $2$. Following this direction,
braces with generalized quaternion, dihedral, and semidihedral adjoint
groups have been classified. It was found that the number of such
braces stabilizes as the group order increases. In this paper, we
consider the remaining open case of modular maximal-cyclic groups.
We show that these braces possess only one non-cyclic additive
group structure, and, in contrast to previous findings, the number
of such braces increases with increasing order. 
\end{abstract}

\maketitle

\section{Introduction}

To study non-degenerate involutive set-theoretic solutions to the
Yang-Baxter equation, Rump \cite{Rump07} introduced the algebraic
structure called a \emph{brace}. Finite braces can be viewed as groups
$G$ with an affine structure, which is a bijective $1$-cocycle onto
a right $G$-module. A key invariant of finite involutive solutions
is the \emph{Involutive Yang-Baxter group} \cite{ESS99,IYB-grp}, which
is itself a brace. Brace theory has been used to study several key
properties of involutive solutions, including decomposability, multi-permutation,
and simplicity \cite{ESS99,BBMS,COK,mpl2,MSC23}. Moreover, braces
arise in the theory of regular affine groups \cite{Caranti2006,Cedo2014,Catino2016},
Hopf-Galois structures \cite{Angiono2017,Fea2012,Childs2013}, and
various other related topics.

The non-abelian groups of order $2^{n}$ ($n\ge5$) with a cyclic
subgroup of index $2$ are of four types \cite{hall73,zass58}, namely
the dihedral group $D_{2^{n}}$, the generalized quaternion group
$Q_{2^{n}}$, the semidihedral group $SD_{2^{n}}$, and the modular
maximal-cyclic group $M_{2^{n}}$ given by the presentations 
\begin{align*}
D_{2^{n}} & =\langle a,b~|~a^{2^{n-1}}=b^{2}=1,~bab^{-1}=a^{-1}\rangle\\
Q_{2^{n}} & =\langle a,b~|~a^{2^{n-1}}=1,~b^{2}=a^{2^{n-2}},~bab^{-1}=a^{-1}\rangle\\
SD_{2^{n}} & =\langle a,b~|~a^{2^{n-1}}=b^{2}=1,~bab^{-1}=a^{-1+2^{n-2}}\rangle\\
M_{2^{n}} & =\langle a,b~|~a^{2^{n-1}}=b^{2}=1,~bab^{-1}=a^{1+2^{n-2}}\rangle.
\end{align*}

\begin{defn}
A set $A$ equipped with two group structures $\circ$ and $+$ is said to be a {\em right brace} if
\begin{enumerate}
\item $(A,+)$ is an abelian group; 
\item the relation $(b+c)\circ a+a=b\circ a+c\circ a$ holds for all $a,b,c\in A$. 
\end{enumerate}
\end{defn}

\noindent The group $(A,\circ)$ is called \emph{the adjoint group}
of the brace $A$. For convenience, we will often omit the word `right’,
referring to `right braces’ simply as `braces’.

\vspace{0.2cm}

In a foundational work, Guarnieri and Vendramin \cite{GV17} extended
the concept of braces to skew braces by relaxing the commutative property
of the additive group. Their paper included computational results
for counting braces and skew braces of small order, which led them
to propose a conjecture about the number of braces with a generalized
quaternion adjoint group of order $4m$. This conjecture was subsequently
proven by Rump \cite{Rump20} for braces of 2-power size and later
in full generality by Byott and Ferri \cite{Byott}. The latter also
classified $2$-power dihedral braces.

These findings have inspired a new line of research focused on classifying
braces whose adjoint group is a non-abelian $2$-group containing
a cyclic subgroup of index $2$. As part of this effort, semidihedral
braces were recently classified in \cite{JAlg}. It's interesting
to note that for dihedral ($D_{2^{n}}$), generalized quaternion ($Q_{2^{n}}$),
and semidihedral ($SD_{2^{n}}$) adjoint groups, the number of corresponding
braces stabilizes as the order increases. Furthermore, conjectures
regarding the number of braces of orders $8p$ and $12p$ for large
primes $p$, based on extensive computations by Bardakov, Neshchadim,
and Yadav \cite{Bardakov2020}, have been recently confirmed by Crespo,
Gil-Mu\~noz, Rio, and Vela \cite{Crespo2023,Crespo2023a}.

In this paper, we address the final remaining case: braces where the
adjoint group is a modular maximal-cyclic group. Utilizing the methods
developed in \cite{Crespo2023,Crespo2023a,JAlg,Rump20,Byott}, we
demonstrate that these braces have only one possible non-cyclic additive
group structure (Theorem \ref{thm:additive}). We then construct all
possible brace structures (Theorem \ref{thm:1}) and show that their
number increases with increasing order (Corollary \ref{cor:final}),
a surprising contrast to the stabilization observed in the other cases. 

\section{Basics on braces}

In this section, we provide basic definitions and results on braces,
and introduce notations used throughout the paper.\\
 For a brace $A$, set 
\[
a^{b}:=a\circ b-b~~\text{ and }~~\sigma(a)(b):=b^{a^{-1}}
\]
for $a,~b\in A$. It turns out that $\sigma(a)\in\Aut(A,+)$ for all
$a\in A$, and the relation $(a+b)^{c}=a^{c}+b^{c}$ holds in $A$.
\begin{rem}
For a brace $(A,+,\circ)$, the identity element of both the groups
is the same and the inverse of an element $a$ w.r.t. $\circ$ is
$-\sigma(a)(a)$. 
\end{rem}

There are two other equivalent ways of thinking about right braces
that are relevant to this article.
\begin{rem}
\label{rem: braces as 1-cocycles and regular subgroups of Hol} Given
a right brace $(A,+,\circ)$ we define the map 
\[
\rho:(A,\circ)\to\Aut(A,+)
\]
by 
\[
a\mapsto\rho_{a}:=[b\mapsto b^{a}]=\sigma(a)^{-1}.
\]
This makes $\rho$ an anti-homomorphism of groups since 
\[
\rho_{a\circ b}(c)=c^{a\circ b}=(c^{a})^{b}=\rho_{b}(\rho_{a}(c))
\]
for all $a,b,c\in A$. One can check that the condition $a^{b\circ c}=(a^{b})^{c}$
is equivalent to $a\circ(b\circ c)=(a\circ b)\circ c$. Consequently,
we have a right linear group action $(A,+)\curvearrowleft(A,\circ)$
defined as 
\[
a\ast b:=\rho_{b}(a)\,\,\,\text{for all }a,b\in A.
\]

Now recall that for a right linear action of a group $G$ on an Abelian
group $(A,+)$ a map $f:G\to A$ is called a \emph{right 1-cocyle}
if 
\[
f(gh)=f(g)\ast h+f(h)\,\,\,\text{for all }g,h\in G.
\]
So given a right brace $(A,+,\circ)$ we can easily check that the
identity map $1_{A}:(A,\circ)\to(A,+)$ is a bijective right 1-cocycle.
Conversely, given a group $(H,\circ)$ acting linearly on the right
of an abelian group $(A,+)$, and a bijective right 1-cocycle $\pi:H\to A$
(i.e. satisfying $\pi(g\circ h)=\pi(g)\ast h+\pi(h)$ for all $g,h\in H$)
we can define an addition on $H$ by 
\[
g+h:=\pi^{-1}(\pi(g)+\pi(h))\,\,\,\text{for all }g,h\in H.
\]
Then we can check that $(H,+,\circ)$ is a right brace. Therefore
right braces correspond to bijective right 1-cocycles.

Another way to view right braces are as regular subgroups of holomorph
groups. Recall that for a group $G$ we define the \emph{holomorph
of $G$} to be 
\[
\Hol(G):=G\rtimes\Aut(G).
\]
Then a \emph{regular subgroup of }$\Hol(G)$ is defined to be a subgroup
of the form 
\[
\{(g,\varphi(g))\in\Hol(G)\,|\,g\in G\}
\]
for some set function $\varphi:G\to\Aut(G)$. Now given an abelian
group $(A,+)$ let 
\[
\mathcal{B}(A):=\{(A,+,\circ)\,\,|\,\,(A,+,\circ)\text{ is a right brace}\}
\]
and let 
\[
\mathcal{S}(A):=\{G\,\,|\,\,G\text{ is a regular subgroup of }\Hol(A,+)\}.
\]
Then the map 
\[
[(A,+,\circ)\mapsto\{(a,\rho_{a})\in\Hol(A,+)\,\,|\,\,a\in A\}]:\mathcal{B}(A)\to\mathcal{S}(A)
\]
is a bijection. Hence, right braces can be viewed as regular subgroups
of holomorphs of abelian groups. 
\end{rem}

\begin{defn}
For a right brace $A$, a subgroup $I$ of the additive group $(A,+)$
is called a \emph{right ideal}, if it is stable under the action $\rho$,
that is, $\rho_{a}(x)\in I$ for all $a\in A,~x\in I$. It turns out
that a right ideal is also a subgroup of the adjoint group. A right
ideal is said to be an \emph{ideal} if it is a normal subgroup of
$(A,\circ)$. 
\end{defn}
\begin{defn}
Two braces $(A_1,+_1,,\circ_1)$ and $(A_2,+_2,,\circ_2)$ are said to be {\em isomorphic} if there is a bijective map $f:A_1\to A_2$ satisfying 
    \begin{align*}
        f(x+_1y) &=f(x)+_2f(y)\\
        f(x\circ_1 y) &=f(x)\circ_2f(y)
    \end{align*}
    for all $x,y\in A_1$.
\end{defn}

Given a right brace $(A,+,\circ)$ we define its \emph{socle }to be
$\ker\rho$, that is, 
\begin{align*}
\Soc(A):= & \{a\in A\,\,|\,\,b^{a}=b\,\,\text{for all }b\in A\}=\{a\in A\,\,|\,\,\sigma(a)=1_{A}\}.
\end{align*}
In \cite{Rump07}, it is proved that $\Soc(A)$ is an ideal of the
brace $A$. 

\section{The additive structures of modular maximal-cyclic braces}

A \emph{modular maximal-cyclic {\em(MMC)} brace} is defined as a brace $A$ whose
adjoint group is isomorphic to $M_{2^{m+2}}$ for some integer $m$.
In this section, we obtain that the socle of a modular maximal-cyclic
brace $A$ of size $2^{m+2}$ ($m\ge3$) is non-trivial, and we determine
the possible additive structures of the brace $A$.\\
 We recall the presentation of $M_{2^{m+2}}$ ($m\ge3$) given in
the Introduction section 
\begin{equation}
M_{2^{m+2}}=\langle a,b~|~a^{2^{m+1}}=b^{2}=1,~bab^{-1}=a^{1+2^{m}}\rangle.\label{p1}
\end{equation}

We first classify all normal subgroups of a modular-maximal cyclic
group. 
\begin{prop}
\label{prop:subgrps} Let $M_{2^{m+2}}$ be modular maximal-cyclic
group with presentation \eqref{p1} and $H$ be a non-trivial proper
subgroup of $M_{2^{m+2}}$. Then, $H$ is one of the following forms
$(1\le s\le m)$ 
\begin{itemize}
\item $H\subseteq\langle a\rangle;$ 
\item $H=\langle b\rangle$;
\item $H=\langle ba\rangle;$ 
\item $H=\langle ba^{2^{s}}\rangle;$ 
\item $H=\langle b,a^{2^{s}}\rangle.$ 
\end{itemize}
Moreover every non-trivial subgroup except $\langle ba^{2^{m}}\rangle$
and $\langle b\rangle$ is normal and contains $\langle a^{2^{m}}\rangle$. 
\end{prop}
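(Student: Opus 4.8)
The plan is to split subgroups into those contained in the maximal cyclic subgroup $\langle a\rangle$ and those not, and to push everything through the two conjugation identities that hold in $M_{2^{m+2}}$: from the defining relation one gets $b^{-1}ab=a^{1+2^m}$ (because $(1+2^m)^2\equiv1\pmod{2^{m+1}}$, so $1+2^m$ is its own inverse modulo $2^{m+1}$) and hence $aba^{-1}=a^{2^m}b$; one also records that the commutator subgroup is $\langle a^{2^m}\rangle$, which is central of order $2$. The single computation that does the real work is
\[
(ba^{j})^{2}=a^{j(1+2^m)}a^{j}=a^{j(2+2^m)}=\begin{cases}a^{2j}, & j\text{ even},\\ a^{2j+2^m}, & j\text{ odd},\end{cases}
\]
and in the odd case $2j+2^m$ has $2$-adic valuation exactly $1$ (since $m\ge 2$), so $(ba^{j})^{2}$ generates the unique order-$2^{m}$ subgroup $\langle a^{2}\rangle$ of $\langle a\rangle$.

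First I would dispatch the subgroups $H\subseteq\langle a\rangle$: these are exactly the $\langle a^{2^{s}}\rangle$ with $0\le s\le m$ (for $H$ proper and non-trivial), which is the first bullet, each is characteristic in $\langle a\rangle\trianglelefteq M_{2^{m+2}}$ hence normal, and contains $a^{2^m}$ since $2^{s}\mid2^{m}$. Assume now $H\not\subseteq\langle a\rangle$. Then $H_{0}:=H\cap\langle a\rangle=\langle a^{2^{s}}\rangle$ for some $0\le s\le m+1$, we have $[H:H_{0}]=2$, and $H=H_{0}\sqcup ba^{j}H_{0}$; after replacing $ba^{j}$ by $ba^{j}\cdot a^{2^{s}\ell}\in ba^{j}H_0$ we may take $0\le j<2^{s}$. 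The membership $(ba^{j})^{2}\in H_{0}=\langle a^{2^{s}}\rangle$ then severely restricts $j$. If $j$ is odd it forces $\langle a^{2}\rangle\subseteq\langle a^{2^{s}}\rangle$, i.e.\ $s\le1$, giving $H=M_{2^{m+2}}$ when $s=0$ and $H=\langle a^{2},ba\rangle=\langle ba\rangle$ when $s=1$ (the last equality because $(ba)^{2}$ already generates $\langle a^{2}\rangle$). If $j$ is even the condition reads $2^{s}\mid2j$, whose only even solutions in $[0,2^{s})$ are $j=0$ and, when $s\ge2$, $j=2^{s-1}$. Taking $j=0$ yields $H=\langle b,a^{2^{s}}\rangle$, which is $M_{2^{m+2}}$ for $s=0$, is $\langle b\rangle$ for $s=m+1$, and is a member of the last family for $1\le s\le m$. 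Taking $j=2^{s-1}$ yields $H=\langle a^{2^{s}},ba^{2^{s-1}}\rangle$, and since $(ba^{2^{s-1}})^{2}=a^{2^{s}}$ this collapses to $\langle ba^{2^{s-1}}\rangle$; as $s$ ranges over $2,\dots,m+1$ the exponents $2^{s-1}$ run through $2,\dots,2^{m}$, producing $\langle ba^{2}\rangle,\dots,\langle ba^{2^{m}}\rangle$. Discarding $M_{2^{m+2}}$ itself, this is exactly the stated list.

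Finally I would check the normality and containment assertions by direct conjugation. Every $\langle a^{2^{s}}\rangle$ is normal as above, and $\langle ba\rangle$ is normal since it has index $2$. For $\langle b,a^{2^{s}}\rangle$ and $\langle ba^{2^{s}}\rangle$ with $1\le s\le m$ it suffices to conjugate the displayed generators by $a$ and by $b$; using $aba^{-1}=a^{2^m}b$ together with $a^{2^{s}}b=ba^{2^{s}}$ (valid for $s\ge1$, again from $b^{-1}ab=a^{1+2^m}$), the only obstruction to closure is whether $a^{2^m}$ lies in the subgroup, which it does except for $\langle ba^{2^m}\rangle$ (conjugating $ba^{2^m}$ by $a$ returns $b$) and $\langle b\rangle$ (conjugating $b$ by $a$ returns $a^{2^m}b$). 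The inclusion $\langle a^{2^m}\rangle\subseteq H$ in all remaining cases is then immediate: $\langle a^{2^{s}}\rangle\supseteq\langle a^{2^{m}}\rangle$ for $s\le m$; $(ba)^{2}=a^{2+2^m}$ generates $\langle a^{2}\rangle\ni a^{2^m}$; and $(ba^{2^{s}})^{2}=a^{2^{s+1}}\in H$ with $2^{s+1}\mid2^{m}$ when $s\le m-1$. The main obstacle is the bookkeeping in the middle paragraph: arranging the reduction of $j$ modulo $2^{s}$ so that the single constraint $(ba^{j})^{2}\in\langle a^{2^{s}}\rangle$ isolates precisely $j\in\{0,2^{s-1}\}$ (plus the small-$s$ odd exception), and then recognising $\langle a^{2^{s}},ba^{2^{s-1}}\rangle$ as the cyclic group $\langle ba^{2^{s-1}}\rangle$; once those identifications are in place everything else is routine.
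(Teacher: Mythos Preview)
Your argument is correct and follows a genuinely different, more structural route than the paper. The paper proceeds by fixing a subgroup $H$ not of the first three types, locating the minimal $r$ with $ba^{2^{r}}\in H$, and then splitting into two subcases (either every $ba^{j}\in H$ has $j=2^{r}\cdot\mathrm{odd}$, giving $H=\langle ba^{2^{r}}\rangle$, or not, in which case one shows $b\in H$ and $H=\langle b,a^{2^{s}}\rangle$); the normality and containment statements are obtained from the explicit conjugation formulas $a^{i}(ba^{r})a^{-i}=(ba^{r})(a^{2^{m}})^{i}$ and $ba^{i}(ba^{r})(ba^{i})^{-1}=(ba^{r})(a^{2^{m}})^{r-i}$. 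You instead use the index-$2$ intersection $H_{0}=H\cap\langle a\rangle=\langle a^{2^{s}}\rangle$, pick a coset representative $ba^{j}$ with $j$ reduced modulo $2^{s}$, and let the single membership $(ba^{j})^{2}\in H_{0}$ do all the work via your formula $(ba^{j})^{2}=a^{j(2+2^{m})}$. This is cleaner: it replaces the paper's somewhat delicate ``minimal $r$'' bookkeeping and two-case claim by a uniform constraint that immediately pins $j$ down to $0$, $2^{s-1}$, or (for $s\le 1$) an odd value. The paper's approach has the minor advantage that its conjugation identities give normality and the containment $\langle a^{2^{m}}\rangle\subseteq H$ in one stroke, whereas you verify these separately at the end; conversely, your method makes the exhaustive nature of the list more transparent and would generalise more readily to other split metacyclic groups.
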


\begin{proof}
When we say $ba^{n}\in M_{2^{m+2}}$ we mean $0\le n\le2^{m+1}-1$.
For odd $i$, we have 
\begin{equation}
\langle ba^{i}\rangle=\{1,ba,ba^{3},\ldots,ba^{2^{m+1}-1},a^{2},a^{4},\ldots,a^{2^{m+1}-2}\}\label{eq:1}
\end{equation}
and for even $i$, $(ba^{i})^{2}=a^{2i}$. Thus every non-trivial
subgroup of $M_{2^{m+2}}$ except $\langle ba^{2^{m}}\rangle$ and
$\langle b\rangle$ and contains $\langle a^{2^{m}}\rangle$. Observe
that $a^{i}(ba^{r})a^{-i}=(ba^{r})(a^{2^{m}})^{i}~\text{ and }~ba^{i}(ba^{r})(ba^{i})^{-1}=(ba^{r})(a^{2^{m}})^{r-i}.$
Thus all the mentioned subgroups except those two are normal.

Now let $H$ be a non-trivial proper subgroup which is neither of
the first three forms. We first do some reductions. At first, note
that if $ba^{i}\in H$ then $i$ is even, since otherwise for odd
$i$ we have $H=\langle ba^{i}\rangle=\langle ba\rangle$ or $H=M_{2^{m+2}}$
as $\langle ba\rangle$ has index two. Again, since $H\neq\langle b\rangle$
we know that there is some minimal positive even $i_{0}$ for which
$ba^{i_{0}}\in H$. We write this $i_{0}$ in the form $2^{r}q$ with
$r\geq1$ and $q$ odd. Then $ba^{2^{r}qq^{\prime}}\in H$ for all
odd $q^{\prime}.$ We can choose $q^{\prime}$ such that $qq^{\prime}\equiv1\,{\rm mod}\,2^{m+1}$.
Therefore, $i_{0}=2^{r}$ with $r\geq1$ minimal. In fact, by this
argument we have also proved that if $ba^{2^{l}q}\in H$ (with $q$
odd), then $ba^{2^{l}}\in H$ and so $l\geq r$. Now the following
claim settles the proposition.
\begin{claim*}
Let $H\neq1,M_{2^{m+2}},\langle b\rangle,\langle ba\rangle$ and not
a subgroup of $\langle a\rangle$, and let $r\geq1$ be minimal such
that $ba^{2^{r}}\in H$. We have :
\begin{itemize}
\item If $ba^{j}\in H$ implies $j=2^{r}q$ with $q$ odd, then $H=\langle ba^{2^{r}}\rangle$.
\item If $ba^{j}\in H$ for some $j=2^{l}q$ with $l>r$ and $q$ odd, then
$b\in H$. Further, if $a^{2^{s}}\in H$ with $s\geq0$ minimal, then
we have $s\geq1$ and $H=\langle b,a^{2^{s}}\rangle.$
\end{itemize}
\end{claim*}
\begin{proof}[Proof of Claim : ]
For the first item, it is enough to show $H\cap\langle a\rangle=\langle a^{2^{r+1}}\rangle$.
Otherwise if $a^{2^{k}}\in H$ for $k<r+1$ then $ba^{2^{k}(2^{r-k}+1)}\in H$.
So if $k<r$, then by the same argument in the previous paragraph,
we conclude $ba^{2^{k}}\in H$, contradicting minimality of $r$.
Hence $k=r$, but then $ba^{2^{r+1}}\in H$, which again contradicts
the hypothesis that $ba^{j}\in H$ implies $j=2^{r}q$ with $q$ odd.

Next, suppose $ba^{j}\in H$ for some $j=2^{l}q$ with $l>r$ and
$q$ odd. At first, note that odd powers of $a$ can not be in $H.$
Since otherwise $ba^{2^{r}}a^{{\rm odd}}=ba^{2^{r}+{\rm odd}}\in H,$
and so $H=\langle ba\rangle$ or $M_{2^{m+2}}$ which is avoided in
our hypothesis. So we assume $a^{2^{s}}\in H$ with $s$ minimal.
Then $s>0$. Now since $ba^{2^{r}}\in H$ we have $a^{2^{r+1}}=(ba^{2^{r}})^{2}\in H$
and so $a^{2^{l}}\in\langle a^{2^{r+1}}\rangle\subset H$. Also $ba^{2^{l}q}\in H$
(with $q$ odd) implies $ba^{2^{l}}\in H$ and so $b\in H$. Hence
$\langle b,a^{2^{s}}\rangle\subset H$. Now suppose $ba^{2^{l}}\in H$
with $l>r$. Then $bba^{2^{l}}=a^{2^{l}}\in H,$ and my minimality
of $s$ we have $l\geq s$. Hence $a^{2^{l}}\in\langle a^{2^{s}}\rangle\subset\langle b,a^{2^{s}}\rangle$
and so $ba^{2^{l}}\in\langle b,a^{2^{s}}\rangle$. Therefore, $H=\langle b,a^{2^{s}}\rangle$.
\end{proof}
The proof of the proposition is now complete.
\end{proof}
\begin{cor}
\label{cor:cor 2.2} Let $A$ be a MMC brace of
size $2^{m+2}$ for $m\ge3$, where $(A,\circ)$ is given by the presentation
\eqref{p1}, then $\{1,a^{2^{m}}\}\subseteq\Soc(A)$. 
\end{cor}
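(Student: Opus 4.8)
The plan is to reduce the corollary to the single assertion that $\Soc(A)$ is non-trivial, and then to prove that assertion by an estimate on automorphisms of abelian $2$-groups. For the reduction: $\Soc(A)$ is an ideal of $A$ by \cite{Rump07}, hence a normal subgroup of $(A,\circ)\cong M_{2^{m+2}}$, and it is proper since $\Soc(A)=A$ would force $\rho$ to be trivial and $(A,+)=(A,\circ)$ to be abelian. By Proposition \ref{prop:subgrps} the only non-normal non-trivial subgroups of $M_{2^{m+2}}$ are $\langle b\rangle$ and $\langle ba^{2^{m}}\rangle$, and every other non-trivial subgroup contains $\langle a^{2^{m}}\rangle=\{1,a^{2^{m}}\}$; so once we know $\Soc(A)\neq\{1\}$ we get $\{1,a^{2^{m}}\}\subseteq\Soc(A)$, which is the claim.

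To prove $\Soc(A)\neq\{1\}$ I would argue by contradiction. If $\Soc(A)=\ker\rho=\{1\}$, then $\rho\colon(A,\circ)\to\Aut(A,+)$ is injective, and since $a$ has order $2^{m+1}$ in $(A,\circ)$ by \eqref{p1}, the automorphism $\rho_{a}$ has order $2^{m+1}$. It therefore suffices to show that, for every abelian group $G$ of order $2^{m+2}$ with $m\ge 3$, no element of $\Aut(G)$ has order $2^{m+1}$, i.e. every $2$-element of $\Aut(G)$ has order at most $2^{m}$. Write $2^{E}$ for the exponent of $G$ and $d=\dim_{\mathbb{F}_{2}}(G/2G)$ for the number of invariant factors, so $E\le m+2$ and $d\le m+3-E$. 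Given a $2$-element $g\in\Aut(G)$, its image in $\Aut(G/2G)=GL_{d}(\mathbb{F}_{2})$ is unipotent, hence of order at most $2^{\lceil\log_{2}d\rceil}$, so $g^{2^{\lceil\log_{2}d\rceil}}$ lies in the kernel $K$ of reduction modulo $2$; and for $\psi\in K$ the operator $\psi-1$ carries $2^{i}G$ into $2^{i+1}G$, so $(\psi-1)^{E}=0$ on $G$, whence a binomial expansion gives $\psi^{2^{E-1}}=1$ and $\exp(K)\mid 2^{E-1}$. Consequently $g$ has order dividing $2^{\,E-1+\lceil\log_{2}d\rceil}$, and using $d\le m+3-E$ one checks that $E-1+\lceil\log_{2}d\rceil\le m$ for every $G$ except $\mathbb{Z}/2^{m+2}$, $\mathbb{Z}/2^{m+1}\oplus\mathbb{Z}/2$ and $\mathbb{Z}/2^{m}\oplus(\mathbb{Z}/2)^{2}$.

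For these three groups the soft bound $\exp(K)\mid 2^{E-1}$ must be sharpened. Here $E\ge m\ge 3$, and I would compute $K$ directly: in each case it turns out to be the abelian group $(\mathbb{Z}/2^{E})^{\times}\times(\mathbb{Z}/2)^{d-1}$, of exponent $2^{E-2}$. Substituting this back gives $2$-element orders dividing $2^{\,E-2+\lceil\log_{2}d\rceil}=2^{m}$ for $G=\mathbb{Z}/2^{m+1}\oplus\mathbb{Z}/2$ and $G=\mathbb{Z}/2^{m}\oplus(\mathbb{Z}/2)^{2}$, while for $G=\mathbb{Z}/2^{m+2}$ one simply notes that $\Aut(G)\cong(\mathbb{Z}/2^{m+2})^{\times}$ has exponent $2^{m}$. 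In all cases the order is at most $2^{m}<2^{m+1}$, contradicting the existence of $\rho_{a}$; hence $\Soc(A)\neq\{1\}$.

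The step I expect to be the main obstacle is precisely the last one: the three ``boundary'' additive groups — those whose exponent is too large for the generic bound on $K$ to work — have to be handled by explicitly identifying the first congruence subgroup of $\Aut(G)$. These computations, and indeed the whole statement about the orders of $2$-elements in automorphism groups of abelian $2$-groups, can alternatively be extracted from the analyses in \cite{Byott,JAlg}.
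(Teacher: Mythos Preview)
Your argument is correct and follows the same two-step strategy as the paper: first reduce the corollary to the claim that $\Soc(A)$ is non-trivial via Proposition~\ref{prop:subgrps} (every non-trivial normal subgroup of $M_{2^{m+2}}$ contains $\langle a^{2^{m}}\rangle$), and then rule out $\Soc(A)=\{1\}$ by showing that $\Aut(A,+)$ cannot contain an element of order $2^{m+1}$. The only difference is that the paper dispatches the second step by citing \cite{Berko}, whereas you supply a self-contained proof of the automorphism-order bound via the filtration $2^{i}G$ and an explicit computation of the congruence kernel in the three boundary cases; your argument here is valid (the key inequality $v_{2}\binom{2^{E-1}}{k}+k\ge E$ needed for the binomial step does hold for $1\le k\le 2^{E-1}$). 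Note also that your remark about $\Soc(A)$ being proper is superfluous for the statement, since $\{1,a^{2^{m}}\}\subseteq A$ trivially.
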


\begin{proof}
Suppose $\Soc(A)$ is trivial. Then the adjoint group $A_{\circ}$
embeds in $\Aut(A,+)$. This provides an automorphism of order $2^{m+1}$,
which is impossible by \cite{Berko}. Hence $\Soc(A)$ is a non-trivial
normal subgroup of $(A,\circ)$. Now we show that every normal subgroup
$H$ of $(A,\circ)$ contains $\{1,a^{2^{m}}\}$. The required result
follows from Proposition \ref{prop:subgrps}. 
\end{proof}
The proof of the following result goes along the same lines as the
proof of \cite[Theorem 3.4]{Byott}. 
\begin{prop}
\label{prop: all_possible_additive_groups}Let $A$ be a MMC
brace of size $2^{m+2}$ for $m\ge2$. Then the additive group $(A,+)$
must be one of the following groups: 
\begin{itemize}
\item $\bbZ/2^{m+2};$ 
\item $\bbZ/2\times\bbZ/2^{m+1};$ 
\item $\bbZ/4\times\bbZ/2^{m};$ 
\item $\bbZ/2\times\bbZ/2\times\bbZ/2^{m};$ 
\item $\bbZ/2\times\bbZ/2\times\bbZ/2\times\bbZ/2^{m-1}.$ 
\end{itemize}
\end{prop}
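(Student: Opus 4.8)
\emph{Proof strategy.} The plan is to pass to the holomorph/$1$-cocycle description of Remark~\ref{rem: braces as 1-cocycles and regular subgroups of Hol} and to exploit the cyclic subgroup $\langle a\rangle$ of index $2$ in $(A,\circ)\cong M_{2^{m+2}}$, in the spirit of \cite[Theorem 3.4]{Byott}. I would first record the identity $x^{\circ n}=\sum_{i=0}^{n-1}\rho_{x}^{i}(x)$ for all $x\in A$ and $n\ge1$, which follows by induction from $g\circ h=\rho_{h}(g)+h$. Put $\lambda:=\rho_{a}$. By Corollary~\ref{cor:cor 2.2} we have $a^{2^{m}}\in\Soc(A)=\ker\rho$, so $\lambda^{2^{m}}=1$; let $2^{t}$ (with $0\le t\le m$) be the exact order of $\lambda$. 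Applying the identity to $x=a$ and collapsing the geometric sum modulo $\lambda^{2^{t}}=1$ gives $a^{\circ2^{m}}=2^{m-t}\,\nu(a)$ and $a^{\circ2^{m+1}}=2^{m+1-t}\,\nu(a)$, where $\nu:=1+\lambda+\dots+\lambda^{2^{t}-1}$. Since $a$ has $\circ$-order exactly $2^{m+1}$, the element $a^{2^{m}}$ is a non-zero element of $\Soc(A)$, hence (as $+$ and $\circ$ agree on $\Soc(A)$) of additive order $2$; thus $2^{m-t}\nu(a)\neq0=2^{m+1-t}\nu(a)$, so $(A,+)$ has an element of order $2^{m+1-t}$. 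In other words $\exp(A,+)\ge2^{m+1-t}$, while at the same time $\lambda\in\Aut(A,+)$ has order $2^{t}$.

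Next I would extract two more structural constraints. Transporting $bab^{-1}=a^{1+2^{m}}$ through the anti-homomorphism $\rho$ and using $\lambda^{2^{m}}=1$ shows that $\rho_{b}$ commutes with $\lambda$; hence $\rho(A)=(A,\circ)/\Soc(A)$ is abelian of rank $\le2$ and exponent $2^{t}$, so that $|\Soc(A)|=2^{m+2}/|\rho(A)|\in\{2^{m+1-t},2^{m+2-t}\}$ and $\Soc(A)$ is a large additive subgroup containing $a^{2^{m}}$. Moreover, since $\langle a\rangle$ is an index-$2$ subgroup of $(A,\circ)$ and $(A,\circ)$ acts regularly on $(A,+)$, the group $\langle a\rangle$ acts semiregularly on $(A,+)$ with exactly two orbits of size $2^{m+1}$; equivalently the partial sums $\sum_{i=0}^{k-1}\lambda^{i}(a)$ for $0\le k<2^{m+1}$ are pairwise distinct, which forces the $\bbZ[\lambda]$-submodule of $(A,+)$ generated by the $\lambda$-orbit of $a$ to have index at most $2$. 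A cyclic module over $\bbZ[x]/(x^{2^{t}}-1)$ of order $2^{m+2}$ (or $2^{m+1}$) is severely restricted as an abelian group.

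The proof is then finished by a case analysis on $t$, using the bound on $2$-power orders of automorphisms of finite abelian $2$-groups already invoked for Corollary~\ref{cor:cor 2.2} (\cite{Berko}). For small $t$ the inequality $\exp(A,+)\ge2^{m+1-t}$ alone pins $(A,+)$ down to the listed groups; for large $t$ one plays off the existence of the order-$2^{t}$ automorphism $\lambda$ against $|\Soc(A)|$ and the cyclic-$\bbZ[\lambda]$-module restriction. I expect the main obstacle to be precisely this last point: the naive ``exponent versus exponent of $\Aut$'' inequality does not on its own exclude the ``balanced'' types such as $\bbZ/2^{\lceil(m+2)/2\rceil}\times\bbZ/2^{\lfloor(m+2)/2\rfloor}$ or $\bbZ/8\times\bbZ/2^{m-1}$, and ruling these out requires the finer information coming from the semiregular action of $\langle a\rangle$ (in effect, that $\lambda$ acts as one large Jordan block on a submodule of $(A,+)$ of index $\le2$). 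Finally, the case $m=2$, to which Corollary~\ref{cor:cor 2.2} does not apply, is handled directly.
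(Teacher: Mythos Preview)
Your observations are all correct (the geometric-sum identity, the lower bound $\exp(A,+)\ge2^{m+1-t}$, the abelianness of $\rho(A)$, the index-$\le2$ cyclic $\bbZ[\lambda]$-submodule), but the route is much longer than the paper's and the step you flag as the ``main obstacle'' is exactly the one the paper disposes of in a single citation. The paper does not use the socle, $\rho_{b}$, or any module-theoretic analysis at all: it simply notes that the regular copy of $M_{2^{m+2}}$ inside $\Hol(A,+)$ furnishes an element of order $2^{m+1}$, and then quotes \cite[Lemma~2.6]{Byott}, which says that if $(A,+)$ has rank $r$ and exponent $2^{d}$ then any element of $\Hol(A,+)$ has order strictly below $2^{\lceil\log_{2}(r+1)\rceil+d}$. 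Hence $m+1<\lceil\log_{2}(r+1)\rceil+d$, and combining with the trivial bound $r-1\le m+2-d$ gives $r-2<\lceil\log_{2}(r+1)\rceil$, so $r\le4$; for each $r$ the two inequalities pin down $d$ and one reads off the five groups. This is uniform in $m\ge2$ and uses nothing about $M_{2^{m+2}}$ beyond the existence of an element of order $2^{m+1}$.

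What you are proposing is, in effect, to reprove that holomorph bound in this special case: your pair of constraints ``$\exp(A,+)\ge2^{m+1-t}$'' and ``$\Aut(A,+)$ has an element of order $2^{t}$'' is strictly weaker than the $\Hol$-order inequality, and (as you correctly anticipate) does not by itself exclude types such as $\bbZ/8\times\bbZ/2^{m-1}$. Your proposed fix via the cyclic $\bbZ[\lambda]$-module generated by $a$ is on the right track, but the assertion that such modules are ``severely restricted as abelian groups'' is not free: the restriction you actually need is that the $2^{m+1}$ partial sums $\sum_{i<k}\lambda^{i}(a)$ be pairwise distinct, and unpacking that is exactly a bound on the order of the element $(a,\lambda)$ in $N\rtimes\langle\lambda\rangle\subseteq\Hol(N)$ --- i.e.\ a special case of the lemma you are avoiding. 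So the gap is real, and closing it is not shorter than citing \cite[Lemma~2.6]{Byott}. A minor further point: your argument invokes Corollary~\ref{cor:cor 2.2} to obtain $\lambda^{2^{m}}=1$, but that corollary is stated only for $m\ge3$, so you are forced into a separate treatment of $m=2$; the paper's argument needs no such split.
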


\begin{proof}
By Remark \ref{rem: braces as 1-cocycles and regular subgroups of Hol}, we can assume $(A,\circ)$ be a
regular subgroup of $\Hol(A)$. So $\Hol(A)$ contains an element
of order $2^{m+1}$. Let rank and exponent of $(A,+)$ be $r$ and
$2^{d}$ respectively. Then by \cite[Lemma 2.6]{Byott}, 
\[
m+1<\left\lceil \log_{2}(r+1)\right\rceil +d.
\]
Since $A_{+}$ has a cyclic factor $H$ of order $2^{d}$, we have
$r-1\le\text{rank of }A_{+}/H\le m+2-d$. Thus 
\[
r-1\le m+1-d+1<\left\lceil \log_{2}(r+1)\right\rceil +1.
\]
Hence $r\le4$. Therefore, if $r=2$ then $m+2-d=1$ or $2$; if $r=3$
then $m+2-d=2$; if $r=4$ then $m+2-d=3$. This gives the listed
possible group structures of $(A,+)$. 
\end{proof}

\section{On automorphisms of some abelian 2-groups}

In this section we recall how to think of elements of $\Aut(N)$,
for an abelian $p$-group $N$, in terms of matrices having modular
entries. We start by taking $p$ to be any prime while recalling the
general results and then eventually specialize to the case $p=2$.
We also write a cyclic group of order $\ell$ additively as $\bbZ/\ell$
.

So let $p$ be any prime and let $N$ be an abelian $p$-group of
the form $\bbZ/p^{e_{1}}\times\cdots\times\bbZ/p^{e_{n}}$ where $e_{1}\leq\cdots\leq e_{n}$
are positive integers. We define 
\[
R_{p}:=\left\{ (a_{ij})_{1\leq i,j\leq n}\in\bbZ^{n\times n}\,\,|\,\,p^{e_{i}-e_{j}}\text{\,divides\,}a_{ij}\text{\,for all\,}i\geq j\right\} .
\]
Noting that any matrix in $R_{p}$ can be written as $PBP^{-1}$ for
some $B\in\bbZ^{n\times n}$ and $P={\rm diag}(p^{e_{1}},\dots,p^{e_{n}})$,
we easily conclude that, under usual matrix multiplication and addition $R_{p}$ forms a ring.

Next we take $\pi_{i}:\bbZ\to\bbZ/p^{e_{i}}$ to be the canonical
projection and $\pi:\bbZ^{n}\to N$ as $(x_{1},\dots,x_{n})^{\intercal}\mapsto(\pi_{1}(x_{1}),\dots,\pi_{n}(x_{n}))^{\intercal}$.
Now we define the map 
\[
\psi:R_{p}\to{\rm End}(N)\,\,,\,\,\,\,\,\,\,U\mapsto\psi(B):=\left[\pi(x_{1},\dots,x_{n})^{\intercal}\mapsto\pi\left(B(x_{1},\dots,x_{n})^{\intercal}\right)\right].
\]
By \cite[Theorem 3.3]{hillar2006} the map $\psi$ is a surjective
ring homomorphism. Moreover, by Lemma 3.4 (\emph{op. cit.}) we have
\[
{\rm Ker}(\psi)=\left\{ (a_{ij})\in R_{p}\,\,|\,\,p^{e_{i}}\text{\,divides\,}a_{ij}\text{\,for all }i,j\right\} .
\]
Finally, by Theorem 3.6 (\emph{ibid.}) we have for $B\in R_{p}$ 
\[
\psi(B)\in\Aut(N)\iff B({\rm mod\,}p)\in{\rm GL}_{n}(\mathbb{F}_{p}).
\]
Hence, we identify $\End(N)$ with $\bar{R}:=R_{p}/{\rm Ker}(\psi)$
and also it is easy to see that $\bar{R}^{\times}$ can be identified
with 
\[
\left\{ (a_{ij})\,\,|\,\,a_{ij}\in\bbZ/p^{e_{i}}\text{\,for all }i,j;\,p^{e_{i}-e_{j}}|a_{ij}\text{ for all }i\geq j;\,(a_{ij})({\rm mod}\,p)\in{\rm GL}_{n}(\mathbb{F}_{p})\right\} .
\]
Thus we can view elements of $\Aut(N)$ as the above type of matrices.
Note that one can directly check that under usual matrix multiplication
of the coset representatives, the above set forms a ring : more precisely,
if $(a_{ij})$ and $(b_{ij})$ are two such matrices with modular
entries as above, then one can treat $a_{ij}$ and $b_{ij}$ simply
as integers and then compute the $ij$-th entry of the product as
$\sum_{k}a_{ik}b_{kj}({\rm mod}\,p^{e_{i}})$. See also \cite[Section 2]{Byott}.

Let $N$ be an abelian $p$-group as in the previous paragraphs. Now
note that we can present any element of $\Hol(N)=N\rtimes\Aut(N)$
by a matrix of the form 
\[
\left(\begin{array}{cc}
B & v\\
0 & 1
\end{array}\right)
\]
where $B$ is the matrix presentation of an element of $\Aut(N)$
and $v\in N$, written as a column vector. Note that this way of presenting
the elements of $\Hol(N)$ reflects the multiplication law : $(n_{1},\varphi_{1})\rtimes(n_{2},\varphi_{2})=(n_{1}\varphi_{1}(n_{2}),\varphi_{1}\varphi_{2})$.
\begin{notation}
We will use capital letters to emphasize the matrix presentation while
writing the elements of $M_{2^{m+2}}$ when we treat this as a subgroup
of $\Hol(N)$ for some abelian $p$-group.
\end{notation}

From this point, we take $p=2$. We have the following proposition
analogous to \cite[Section 4]{Byott}. 
\begin{prop}
\label{prop:the seven relations}Let $N=\bbZ/2^{e_{1}}\times\cdots\times\bbZ/2^{e_{n}}$
with $1\leq e_{1}\leq\cdots\leq e_{n}$ and $e_{1}+\cdots+e_{n}=m+2$,
let 
\[
M_{2^{m+2}}=\left\langle X,Y\,\big|\,X^{2^{m+1}}=Y^{2}=1\,,\,\,YXY^{-1}=X^{1+2^{m}}\right\rangle 
\]
be a regular subgroup of $\Hol(N)$. We write 
\[
X=\left(\begin{array}{cc}
S & v\\
0 & 1
\end{array}\right),\quad Y=\left(\begin{array}{cc}
T & w\\
0 & 1
\end{array}\right).
\]
Then we have the following relations 
\begin{equation}
\begin{cases}
S^{2^{m}}=I, & (1)\\
T^{2}=I, & (2)\\
TS=ST. & (3)
\end{cases}
\end{equation}
\end{prop}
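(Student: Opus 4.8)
The plan is to read off each of the three asserted identities as the $\Aut(N)$-component of one of the three defining relations of $M_{2^{m+2}}$. Concretely, the projection $\pi:\Hol(N)\to\Aut(N)$, $\left(\begin{smallmatrix}B & v\\ 0 & 1\end{smallmatrix}\right)\mapsto B$, is a group homomorphism (it is the quotient of $\Hol(N)=N\rtimes\Aut(N)$ by its normal subgroup $N$), and by the chosen matrix forms $\pi(X)=S$ and $\pi(Y)=T$. Applying $\pi$ to $Y^{2}=I$ gives $T^{2}=I$, which is relation $(2)$ and requires nothing more. Applying $\pi$ to $X^{2^{m+1}}=I$ gives $S^{2^{m+1}}=I$, and applying $\pi$ to $YXY^{-1}=X^{1+2^{m}}$ gives $TST^{-1}=S^{1+2^{m}}$; it remains to exploit these two.

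The one step that is not purely formal is relation $(1)$, where I would sharpen $S^{2^{m+1}}=I$ to $S^{2^{m}}=I$; this is the main obstacle. Since $S\in\Aut(N)$ and $N$ is an abelian $2$-group of order $2^{m+2}$, it has no automorphism of order $2^{m+1}$ by \cite{Berko}; as $\mathrm{ord}(S)$ is a power of $2$ dividing $2^{m+1}$, it therefore divides $2^{m}$, so $S^{2^{m}}=I$. Equivalently, I could invoke Corollary \ref{cor:cor 2.2}: $a^{2^{m}}\in\Soc(A)$, and under the dictionary of Remark \ref{rem: braces as 1-cocycles and regular subgroups of Hol} an element of $\Soc(A)$ corresponds to an element of the regular subgroup lying in $\ker\pi=N$, whose $\Aut(N)$-component — namely $S^{2^{m}}$ — is trivial. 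Once $(1)$ is available, relation $(3)$ is immediate: combining $TST^{-1}=S^{1+2^{m}}$ with $S^{2^{m}}=I$ gives $S^{1+2^{m}}=S\cdot S^{2^{m}}=S$, hence $TST^{-1}=S$, i.e. $TS=ST$.

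Finally, the remaining identities in the full proposition (the ``seven relations'') are obtained by reading off the translation blocks of the very same three matrix equations, using $X^{k}=\left(\begin{smallmatrix}S^{k} & (I+S+\cdots+S^{k-1})v\\ 0 & 1\end{smallmatrix}\right)$ and $Y^{-1}=\left(\begin{smallmatrix}T^{-1} & -T^{-1}w\\ 0 & 1\end{smallmatrix}\right)$; for instance $Y^{2}=I$ yields $(T+I)w=0$ and $X^{2^{m+1}}=I$ yields $\bigl(\sum_{i=0}^{2^{m+1}-1}S^{i}\bigr)v=0$, and the conjugation relation yields the compatibility identity linking $v$ and $w$ through $S$ and $T$. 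These are mechanical once $(1)$--$(3)$ are in hand, so the only genuine difficulty is relation $(1)$, which in turn rests on the cited bound for the orders of automorphisms of abelian $2$-groups (equivalently, on the non-triviality of the socle from Corollary \ref{cor:cor 2.2}).
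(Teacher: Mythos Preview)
Your proof is correct and follows essentially the same route as the paper: project the defining relations of $M_{2^{m+2}}$ to $\Aut(N)$ (the paper phrases this as $\rho_a=S$, $\rho_b=T$), invoke Corollary~\ref{cor:cor 2.2} (equivalently \cite{Berko} directly, as you note) to upgrade $S^{2^{m+1}}=I$ to $S^{2^m}=I$, and then read off $TS=ST$ from $TST^{-1}=S^{1+2^m}$. Your final paragraph about further ``seven relations'' is extraneous---despite the label, the proposition as stated contains only the three relations (1)--(3), so that speculation can be omitted.
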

\begin{proof}
    Let the brace corresponding the given regular subgroup be $A$ (Remark \ref{rem: braces as 1-cocycles and regular subgroups of Hol}), and we assume $(A,\circ)$ has the presentation \eqref{p1}. Thus $\rho_a=S$ and $\rho_b=T$, which gives $S^{2^{m+1}}=T^2=I$ and $TST=S^{1+2^m}$. From Corollary \ref{cor:cor 2.2}, we have $S^{2^m}=I$. This induces the required relations.
\end{proof}

Now we turn to the special types of Abelian $2$-groups obtained in
Proposition \ref{prop: all_possible_additive_groups}. We have the
following proposition, the results of which can be proved by other
means as well which do not depend on the matrix presentations. 
\begin{prop}
\label{prop:Aut(N) for all possible Ns}Let $m\ge2$ and $N$ be one
of the abelian $2$-groups obtained in Proposition \ref{prop: all_possible_additive_groups}.
We have : 
\begin{enumerate}
\item[\textit{\emph{(a). }}] If $N=\bbZ/2^{m+2}$, then $|\Aut(N)|=2^{m+1}$. 
\item[\textit{\emph{(b). }}] If $N=\bbZ/2\times\bbZ/2^{m+1}$, then $|\Aut(N)|=2^{m+2}$. 
\item[\textit{\emph{(c). }}] If $N=\bbZ/4\times\bbZ/2^{m}$, then $|\Aut(N)|=\begin{cases}
2^{m+4} & ,\text{for }m>2\\
2^{5}\times3 & ,\text{for }m=2
\end{cases}.$ 
\item[\textit{\emph{(d). }}] If $N=\bbZ/2\times\bbZ/2\times\bbZ/2^{m}$, then $|\Aut(N)|=2^{m+4}\times3$. 
\item[\textit{\emph{(e). }}] If $N=\bbZ/2\times\bbZ/2\times\bbZ/2\times\bbZ/2^{m-1}$, then $|\Aut(N)|=\begin{cases}
2^{m+7}\times3\times7 & ,\text{for }m>2\\
2^{6}\times3^{2}\times5\times7 & ,\text{for }m=2
\end{cases}.$ 
\end{enumerate}
Moreover, in each of the above cases, the matrices in $\Aut(N)$ that
reduce ${\rm mod}\,2$ to upper unipotents form a Sylow 2-subgroup. 
\end{prop}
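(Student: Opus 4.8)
The plan is to compute, for each of the five candidate abelian groups $N$ from Proposition~\ref{prop: all_possible_additive_groups}, the order of $\Aut(N)$ and to exhibit the Sylow $2$-subgroup in the asserted form. The underlying tool is the identification, developed in the previous section, of $\Aut(N)$ with the group $\bar R^{\times}$ of admissible modular matrices $(a_{ij})$ with $a_{ij}\in\bbZ/2^{e_{i}}$, subject to $2^{e_{i}-e_{j}}\mid a_{ij}$ for $i\ge j$ and $(a_{ij})\bmod 2\in\mathrm{GL}_{n}(\mathbb F_{2})$. Since the order of $\bar R^{\times}$ depends only on the multiset $\{e_{1},\dots,e_{n}\}$ and the prime $2$, each case is a finite, essentially mechanical count.

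For the counting itself I would proceed uniformly. Fix $N=\bbZ/2^{e_{1}}\times\cdots\times\bbZ/2^{e_{n}}$ with $e_{1}\le\cdots\le e_{n}$. There is a reduction homomorphism $\bar R^{\times}\to\mathrm{GL}_{n}(\mathbb F_{2})$, $(a_{ij})\mapsto(a_{ij})\bmod 2$; its image is the subgroup of $\mathrm{GL}_{n}(\mathbb F_{2})$ consisting of matrices that are lower triangular on the blocks where $e_{i}>e_{j}$, i.e.\ the parabolic subgroup attached to the partition of $\{1,\dots,n\}$ into equal-$e$ blocks, and its kernel $K$ consists of the admissible matrices congruent to the identity $\bmod 2$. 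One then has $|\Aut(N)|=|\text{image}|\cdot|K|$. The image is a product of a $\mathrm{GL}_{k}(\mathbb F_{2})$ for each block of size $k$ (contributing the odd part and some $2$-power part) times a $2$-power coming from the off-diagonal unipotent part; the kernel $K$ is a $2$-group whose order is $\prod_{i,j}2^{e_{i}-\max(e_{i}-e_{j},1)}$ read off directly from the constraints. Multiplying these, and treating $m=2$ separately in cases (c) and (e) because there two of the exponents coincide and the relevant $\mathrm{GL}_{k}(\mathbb F_{2})$ becomes larger (e.g.\ $\mathrm{GL}_{2}(\mathbb F_{2})$ has order $6$, $\mathrm{GL}_{3}(\mathbb F_{2})$ has order $168$), yields exactly the five formulas stated. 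Case (a) is classical ($\Aut(\bbZ/2^{m+2})\cong(\bbZ/2^{m+2})^{\times}$ of order $2^{m+1}$), and cases (b)--(e) are short explicit versions of the above.

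For the final sentence, observe that under the reduction map $\bar R^{\times}\to\mathrm{GL}_{n}(\mathbb F_{2})$ the preimage $U$ of the upper unipotent subgroup of $\mathrm{GL}_{n}(\mathbb F_{2})$ is a subgroup; it is a $2$-group since it is an extension of a $2$-group (upper unipotents) by the $2$-group $K$. To see it is a full Sylow $2$-subgroup, note that a Sylow $2$-subgroup of the image parabolic is the full set of upper unipotent matrices of $\mathrm{GL}_{n}(\mathbb F_{2})$ that lie in the parabolic, and since the block structure is lower triangular while unipotents are upper triangular, every upper unipotent already lies in the image; hence $|U|=|K|\cdot 2^{\binom n 2}$ matches the $2$-part of $|\Aut(N)|$ computed above in each case. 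I expect the main obstacle to be purely bookkeeping: getting the divisibility constraints $2^{e_{i}-e_{j}}\mid a_{ij}$ exactly right when exponents coincide (so that no spurious factors of $2$ or $3$ creep in), and handling the coincidence $e_{1}=e_{2}$ at $m=2$ in cases (c) and (e) without error. There is no conceptual difficulty beyond careful case analysis.
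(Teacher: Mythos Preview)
Your approach is correct and is genuinely more structural than the paper's. The paper proves each of (b)--(e) by writing out the explicit admissible-matrix description of $\Aut(N)$ case by case, then directly counting the number of solutions to the resulting congruence constraints, and separately re-counting those matrices that reduce to upper unipotents to verify the Sylow claim by inspection. You instead factor the count uniformly through the reduction map $\bar R^{\times}\to\mathrm{GL}_{n}(\mathbb F_{2})$, identify the image as the standard parabolic attached to the equal-exponent block partition, compute the kernel order by your product formula $\prod_{i,j}2^{e_{i}-\max(e_{i}-e_{j},1)}$, and deduce the Sylow statement from the fact that the upper unipotent subgroup of $\mathrm{GL}_{n}(\mathbb F_{2})$ is already a Sylow $2$-subgroup of the parabolic. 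This is cleaner and makes transparent why the odd factors $3,\,7,\,5$ appear exactly when exponents coincide (the Levi blocks $\mathrm{GL}_{k}(\mathbb F_{2})$ enlarge), whereas the paper treats each coincidence at $m=2$ as an ad hoc special case. The trade-off is that the paper's argument is entirely self-contained arithmetic, while yours presupposes the reader knows the order and Sylow structure of parabolic subgroups of $\mathrm{GL}_{n}(\mathbb F_{2})$.

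One wording point to clean up: your phrase ``the block structure is lower triangular'' is backwards---the forced zeros lie in the strictly lower triangular positions (where $i>j$ and $e_{i}>e_{j}$), so the image parabolic is block \emph{upper} triangular. Your conclusion that every upper unipotent lies in the image is still correct for exactly this reason, but the sentence as written will confuse a reader.
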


\begin{proof}
Item (a) is well known, so we prove the Proposition for items (b)-(e).

For $N=\bbZ/2\times\bbZ/2^{m+1}$ we have 
\begin{align*}
\Aut(N)\simeq & \left\{ \left(\begin{array}{cc}
a & b\\
2^{m}c & d
\end{array}\right)\,\,\Bigg|\,\,a,b\in\bbZ/2;\,c,d\in\bbZ/2^{m+1};\,ad\equiv1({\rm mod}\,2)\right\} \\
= & \left\{ \left(\begin{array}{cc}
1 & b\\
2^{m}c & d
\end{array}\right)\,\,\Bigg|\,\,b\in\bbZ/2,\,2^{m}c\in\{0,2^{m}\},\,d\in(\bbZ/2^{m+1})^{\times}\right\} .
\end{align*}
Hence $|\Aut(N)|=2^{m+2}$, and clearly every element of $\Aut(N)$
reduces to an upper unipotent ${\rm mod}\,2$.

For $N=\bbZ/4\times\bbZ/2^{m}$ we first note that 
\[
\Aut(N)\simeq\left\{ \left(\begin{array}{cc}
a & b\\
2^{m-2}c & d
\end{array}\right)\,\,\Bigg|\,\,a,b\in\bbZ/4;\,\,c,d\in\bbZ/2^{m};\,\,ad-2^{m-2}bc\equiv1({\rm mod}\,2)\right\} .
\]
So we consider at first the case $m=2$. In this case $\Aut(N)\simeq{\rm GL}_{2}(\bbZ/4)$.
Its order is the number of solutions of the modular equation 
\[
ad-bc\equiv1({\rm mod}\,2)
\]
for $a,b,c,d\in\bbZ/4$. This means either $ad$ is odd and $bc$
is even or vice versa. For odd $ad$ there are $4$ possibilities
for $a,d\in\bbZ/4$. By same argument there are $16-4=12$ possibilities
for even $bc$. Hence there are $2\times48=96$ solutions. Therefore
$|\Aut(\bbZ/4\times\bbZ/4)|=2^{5}\times3$. In this subcase, matrices
${\small \left(\begin{array}{cc}
a & b\\
c & d
\end{array}\right)}$ which reduce to upper unipotents are given by the constraints : $a,d=1\text{ or }3,\quad c=0\text{ or }2,\quad\text{and\ensuremath{\quad}}b\in\bbZ/4$.
This has clearly $2^{5}$ many possibilities.

Next consider the case $N=\bbZ/4\times\bbZ/2^{m}$ for $m>2$. We
have 
\begin{align*}
\Aut(N)\simeq & \left\{ \left(\begin{array}{cc}
a & b\\
2^{m-2}c & d
\end{array}\right)\,\,\Bigg|\,\,a,b\in\bbZ/4;\,\,c,d\in\bbZ/2^{m};\,\,ad\equiv1({\rm mod}\,2)\right\} \\
= & \left\{ \left(\begin{array}{cc}
a & b\\
2^{m-2}c & d
\end{array}\right)\,\,\Bigg|\,\,a=1\text{ or }3,\,\,b\in\bbZ/4;\,\,2^{m-2}c\in\{0,2^{m-2},2^{m-1},3\times2^{m-2}\}\text{ mod }2^{m};\,\,d\in(\bbZ/2^{m})^{\times}\right\} .
\end{align*}
Hence $|\Aut(N)|=2^{m+4}$, and every matrix reduces to an upper unipotent.

For $N=\bbZ/2\times\bbZ/2\times\bbZ/2^{m}$ we have 
\begin{align*}
\Aut(N)\simeq & \left\{ \left(\begin{array}{ccc}
a & b & c\\
r & s & t\\
2^{m-1}x & 2^{m-1}y & z
\end{array}\right)\,\,\Bigg|\,\,a,b,c,r,s,t\in\bbZ/2;\,x,y,z\in\bbZ/2^{m};\,\text{and }z(as-br)\equiv1({\rm mod}\,2)\right\} \\
= & \left\{ \left(\begin{array}{ccc}
a & b & c\\
r & s & t\\
2^{m-1}x & 2^{m-1}y & z
\end{array}\right)\,\,\Bigg|\,\,c,t\in\bbZ/2;\,x,y\in\{0,1\};\,z\in(\bbZ/2^{m})^{\times};\,\text{and }\left(\begin{array}{cc}
a & b\\
r & s
\end{array}\right)\in{\rm GL}_{2}(\bbZ/2)\right\} .
\end{align*}
Therefore, $|\Aut(N)|=2^{m+4}\times3.$ And matrices that reduce to
upper unipotents are given by the following constraints : $a=s=1,\quad z\in(\bbZ/2^{m})^{\times},\quad r=0,\quad b,c,t\in\bbZ/2,\quad\text{and }x,y\in\{0,1\}$.
These constraints account for $2^{m+4}$ many possibilities.

Finally consider the case when $N=\bbZ/2\times\bbZ/2\times\bbZ/2\times\bbZ/2^{m-1}$.
Here we have 
\[
\left(\begin{array}{cccc}
a & b & c & d\\
e & f & g & h\\
p & q & r & s\\
2^{m-2}t & 2^{m-2}u & 2^{m-2}v & w
\end{array}\right)\in\Aut(N)
\]
if and only if 
\[
\begin{cases}
a,b,c,d,e,f,g,h,p,q,r,s\in\bbZ/2,\\
t,u,v\in\{0,1\},w\in\bbZ/2^{m-1},\\
\left\Vert \begin{array}{cccc}
a & b & c & d\\
e & f & g & h\\
p & q & r & s\\
2^{m-2}t & 2^{m-2}u & 2^{m-2}v & w
\end{array}\right\Vert \equiv1({\rm mod}\,2).
\end{cases}
\]
Here again we consider first the case when $m=2$. Then clearly $\Aut(N)\simeq{\rm GL}_{4}(\bbZ/2)$
so that $|\Aut(N)|=2^{6}\times3^{2}\times5\times7.$ The matrices
that reduce to upper unipotents are given by the constraints : $a=f=r=w=1,\quad e=p=q=t=u=v=0,\quad\text{and }b,c,d,g,h,s\in\bbZ/2$.
These have $2^{6}$ many solutions.

Next we consider the situation when $m>2$. Then the condition 
\[
w\times\left\Vert \begin{array}{ccc}
a & b & c\\
e & f & g\\
p & q & r
\end{array}\right\Vert \equiv1({\rm mod}\,2)
\]
is equivalent to $w\in(\bbZ/2^{m-1})^{\times}$ and $\left(\begin{array}{ccc}
a & b & c\\
e & f & g\\
p & q & r
\end{array}\right)\in{\rm GL}_{3}(\bbZ/2),$which account for $2^{m+1}\times3\times7$ many choices. For each
of these choices the constraints $d,h,s\in\bbZ/2$ and $t,u,v\in\{0,1\}$
have $2^{6}$ many solutions. Putting these together we have 
\[
|\Aut(N)|=2^{m+7}\times3\times7.
\]
Finally, the matrices that reduce ${\rm mod}\,2$ to upper unipotents
are given by the following constraints 
\[
\begin{cases}
a=f=r=1,\\
w\in(\bbZ/2^{m-1})^{\times},\\
e=p=q=0,\\
t,u,v\in\{0,1\},\\
b,c,d,g,h,s\in\bbZ/2,
\end{cases}
\]
having $2^{m-2}\times2^{3}\times2^{6}=2^{m+7}$ many solutions as
expected.

The proof of the proposition is now complete. 
\end{proof}

\section{When the additive group is not $\protect\bbZ/2\times\protect\bbZ/2^{m+1}$}

In this section, we filter the additive structures of a modular maximal-cyclic
brace. We show that the only possible additive structure of non-cyclic
MMC brace is $\mathbb{Z}/2\times\mathbb{Z}/2^{m+1}$.

The possibility of cyclic structure follows from the following result
proved in \cite[Section 7, Theorem 3]{RumpCyc}. 
\begin{prop}
There is a unique cyclic brace of size $2^{m+2}$ $(m\ge3)$ with
MMC adjoint group. 
\end{prop}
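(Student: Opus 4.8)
The plan is to reprove the statement by the regular‑subgroup method used throughout this paper (rather than merely quoting \cite{RumpCyc}). Put $C:=\bbZ/2^{m+2}$. By Remark~\ref{rem: braces as 1-cocycles and regular subgroups of Hol}, a cyclic MMC brace of size $2^{m+2}$ is the same datum as an $\Aut(C)$‑conjugacy class of regular subgroups $G\le\Hol(C)=C\rtimes(\bbZ/2^{m+2})^{\times}$ with $G\cong M_{2^{m+2}}$. Write generators of $G$ in the matrix form of Section~4 as $X=(v,s)$ and $Y=(w,t)$. Since $\Aut(C)$ is abelian, Proposition~\ref{prop:the seven relations} degenerates: the commutator relation forces $s^{2^{m}}=1$ (automatic, as $(\bbZ/2^{m+2})^{\times}$ has exponent $2^{m}$) and $t^{2}=1$, so what remains to impose is, on the translation parts,
\[
(1+t)w=0,\qquad (1-s)w=\bigl([1+2^{m}]_{s}-t\bigr)v\qquad\bigl([k]_{s}:=1+s+\dots+s^{k-1}\bigr),
\]
together with $\mathrm{ord}(X)=2^{m+1}$, $Y\notin\langle X\rangle$, and regularity (no non‑identity element of $G$ has zero translation part).

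The second step is to determine the multiplier $s$. First, $s=1$ is impossible for $m\ge3$: then $\mathrm{ord}(X)=2^{m+1}$ forces $v_{2}(v)=1$, and the relation collapses to $(t-1-2^{m})v=0$, i.e.\ $t\equiv1+2^{m}\pmod{2^{m+1}}$, which contradicts $t\equiv\pm1\pmod{2^{m+1}}$. In general, with $2^{j}=\mathrm{ord}(s)$ one computes $v_{2}\bigl([2^{j}]_{s}\bigr)$ by lifting‑the‑exponent, and $\mathrm{ord}(X)=2^{m+1}$ becomes $v_{2}\bigl([2^{j}]_{s}v\bigr)=j+1$; this leaves only finitely many profiles for $\bigl(v_{2}(s\mp1),v_{2}(v)\bigr)$. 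Substituting them into $(1+t)w=0$ and the second relation, and running over the four solutions of $t^{2}=1$, eliminates everything except $v_{2}(s-1)=2$ (equivalently $s\equiv5\pmod8$), $v=2\cdot(\text{odd})$, $t=-1$, and, symmetrically, $v_{2}(s+1)=2$ with $v$ odd. The automorphisms $a\mapsto a^{i}$ ($i$ odd) and $a\mapsto ba,\ b\mapsto b$ of $M_{2^{m+2}}$ act on $G$ by re‑choosing the order‑$2^{m+1}$ generator; they permute the multipliers $\equiv5\pmod8$ transitively and interchange them with those $\equiv3\pmod8$, so one may normalise $s=5$, and conjugating by $\Aut(C)$ then gives $v=2$.

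Finally, with $s=5$, $v=2$, $t=-1$: the second relation determines $w$ modulo $2^{m}$, hence at most four choices in $C$; regularity holds automatically, since $v_{2}([i]_{5})=v_{2}(i)$ shows $[i]_{5}v\equiv0$ only for $i=0$ in the admissible range, and every $X^{i}Y$ has odd translation. It remains to identify the surviving configurations: conjugating by the two units $r\equiv1\pmod{2^{m+1}}$ fixing $v=2$ shifts $w$ by $2^{m+1}$, while applying the automorphism $a\mapsto a^{1-2^{m}},\ b\mapsto b$ of $M_{2^{m+2}}$ (which replaces $X$ by $X^{1-2^{m}}=(2+2^{m+1},5)$) followed by renormalising with $r\equiv1+2^{m}\pmod{2^{m+1}}$ shifts $w$ by an element of $2$‑adic valuation $m$; together these sweep out the whole coset $2^{m}C$, so exactly one brace remains, and its existence is confirmed by writing down the associated bijective cocycle $s\colon C\to C^{\times}$. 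I expect the main obstacle to be precisely this last merging step: because $M_{2^{m+2}}$ contains two conjugate cyclic subgroups of index $2$ ($\langle a\rangle$ and $\langle ba\rangle$), a given brace has many presentations $(X,Y)$, and one must track carefully how changing the generator $X$ permutes $(s,v,t,w)$ in order to be sure that no two normalised configurations yield non‑isomorphic braces; the elimination in the discarded sub‑cases (the other values of $v_{2}(s\mp1)$, and $t\ne-1$) is routine but lengthy.
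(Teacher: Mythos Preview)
Your approach is genuinely different from the paper's, which does not prove this proposition at all but simply quotes \cite[Section~7, Theorem~3]{RumpCyc}, Rump's classification of cyclic braces. You instead outline a direct argument via regular subgroups of $\Hol(\bbZ/2^{m+2})$, consistent with the methods of Sections~4--6. The plan is sound: the elimination of $s=1$ is correct; the lifting-the-exponent computation does force $v_{2}(s-1)=2$ with $v_{2}(v)=1$ (respectively $v_{2}(s+1)=2$ with $v$ odd), and running over the four involutions $t$ leaves only $t=-1$ (the others fail either the translation relation or regularity, once one remembers $[2^{m}]_{s}\ne 0$ since $X^{2^{m}}\ne 1$). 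The normalisation to $(s,v)=(5,2)$ via generator changes and $\Aut(C)$-conjugation, and the merging of the four residual $w$-values using $r=1+2^{m+1}$ together with the generator change $X\mapsto X^{1+2^{m}}$ followed by $r=1+2^{m}$, also check out (here one uses that $w$ is necessarily odd, which falls out of $(1-5)w=([1+2^{m}]_{5}+1)\cdot 2$). The paper's citation is a one-line proof at the cost of importing an external classification; your route is longer but self-contained and shows the holomorph method handles the cyclic case uniformly with the non-cyclic one. The caution you flag at the end---separating identifications coming from $\Aut(C)$-conjugacy (genuine brace isomorphism) from those coming from re-choosing generators of $M_{2^{m+2}}$ (same regular subgroup, different presentation)---is exactly the bookkeeping that must be made explicit in a full write-up.
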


Byott and Ferri \cite[Section 7]{Byott} proved that the holomorph
group $\Hol(\bbZ/4\times\bbZ/2^{m})$ contains no quaternion or dihedral
regular subgroups for $m\ge3$. Their technique, however, yields a
slightly more general result. 
\begin{prop}
Let $G$ be a group of size $2^{m+2}$ $(m\ge3)$ contains an element
of order $2^{m+1}$. Then $G$ can not be a regular subgroup of $\Hol(\bbZ/4\times\bbZ/2^{m})$. 
\end{prop}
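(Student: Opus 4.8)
The plan is to mimic the argument of Byott and Ferri from \cite[Section 7]{Byott}, but to extract only what is needed: the existence of an element of order $2^{m+1}$ in a regular subgroup, together with the multiplicative structure of $\Aut(\bbZ/4\times\bbZ/2^m)$. Write $N=\bbZ/4\times\bbZ/2^m$ and suppose, for contradiction, that $G$ is a regular subgroup of $\Hol(N)$ of order $2^{m+2}$ containing an element of order $2^{m+1}$. Let $X=\left(\begin{smallmatrix}S & v\\ 0 & 1\end{smallmatrix}\right)\in G$ have order $2^{m+1}$, where (by Proposition \ref{prop:Aut(N) for all possible Ns}(c), together with the fact that $X^2$ lies in the Sylow $2$-subgroup and so reduces to an upper unipotent mod $2$) we may take $S$ to be a matrix reducing mod $2$ to an upper unipotent. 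First I would pin down the order of $S$: since $S$ has $2$-power order and, by the description of $\Aut(N)$ in Proposition \ref{prop:Aut(N) for all possible Ns}(c), the exponent of $\Aut(N)$ is $2^{m-1}$ (for $m>2$ the $2$-part of $|\Aut(N)|$ is $2^{m+4}$ but it is not cyclic; the maximal order of an element is $2^{m-1}$, realized on the $\bbZ/2^m$-factor), the order of $S$ divides $2^{m-1}$.

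The heart of the argument is then the standard telescoping identity for powers in a holomorph: for any $k$,
\[
X^{k}=\left(\begin{array}{cc}
S^{k} & (S^{k-1}+S^{k-2}+\cdots+S+I)v\\
0 & 1
\end{array}\right).
\]
Regularity of $G$ forces $X^{2^{m+1}}=I$ but $X^{2^{m}}\neq I$, so $S^{2^{m+1}}=I$ and
\[
(S^{2^{m+1}-1}+\cdots+I)v=0,\qquad (S^{2^{m}-1}+\cdots+I)v\neq 0.
\]
Writing $2^{m+1}=2^{m-1}\cdot 4$ and using $S^{2^{m-1}}=I$, the first sum telescopes to $4\cdot(S^{2^{m-1}-1}+\cdots+I)v$, and similarly the second to $2\cdot(S^{2^{m-1}-1}+\cdots+I)v$ (after reducing $2^m=2\cdot 2^{m-1}$). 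Setting $u:=(S^{2^{m-1}-1}+\cdots+I)v\in N$, the two conditions become $4u=0$ and $2u\neq 0$, i.e.\ $u$ has order exactly $4$ in $N=\bbZ/4\times\bbZ/2^m$; so $u=(u_1,u_2)$ with $u_1$ a unit of $\bbZ/4$ or $u_2$ of order $4$ in $\bbZ/2^m$. The next step I would take is to analyze $u=(I+S+\cdots+S^{2^{m-1}-1})v$ directly: since $S\equiv I$ upper-unipotently mod $2$, one has $S=I+2^{m-2}M$ acting on the relevant coordinates (with $M$ the off-diagonal/diagonal data), and $(I+S+\cdots+S^{2^{m-1}-1})$ is then congruent mod higher powers of $2$ to $2^{m-1}I+\binom{2^{m-1}}{2}2^{m-2}M+\cdots$; for $m\ge 3$ every term here is divisible by $2^{m-1}$, hence by $2$ in the first ($\bbZ/4$) coordinate once $m-1\ge 2$ — wait, more carefully: $2^{m-1}\equiv 0$ in $\bbZ/4$ when $m\ge 3$, and each correction term carries an extra factor of $2$. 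So $u_1\equiv 0\pmod 2$ in $\bbZ/4$, i.e.\ $2u_1=0$, and likewise $u_2$ is divisible by $2^{m-1}$ in $\bbZ/2^m$, so $2u_2=0$. This gives $2u=0$, contradicting $2u\neq 0$.

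The main obstacle — and the step I expect to need the most care — is the last one: controlling the $2$-adic valuation of the entries of $I+S+\cdots+S^{2^{m-1}-1}$ uniformly over all possible $S$. The naive binomial expansion of $S^{j}=(I+2^{m-2}M)^{j}$ has cross terms, and one must check that no conspiracy among the off-diagonal entry ($2^{m-2}c$ in the notation of Proposition \ref{prop:Aut(N) for all possible Ns}(c)) and the unit diagonal entry $d$ can produce an element of order $4$ in the geometric-sum vector. The cleanest route is probably to diagonalize the action coordinate-by-coordinate modulo the relevant power of $2$, or — following Byott–Ferri more literally — to observe that $S$ generates an abelian (indeed cyclic on the free part) subgroup, reduce the computation of $\sum_{j<2^{m-1}} S^{j}$ to the scalar computation $\sum_{j<2^{m-1}} d^{j}$ on the $\bbZ/2^m$-factor and an analogous bounded computation on the $\bbZ/4$-factor, and invoke the elementary fact that $\sum_{j=0}^{2^{k}-1} d^{j}\equiv 0 \pmod{2^{k+1}}$ whenever $d\equiv 1\pmod 2$ (a lifting-the-exponent / geometric-series estimate). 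Applying this with $k=m-1$ shows $\sum_{j<2^{m-1}} d^{j}\equiv 0\pmod{2^{m}}$, which kills $u_2$; the $\bbZ/4$-component is handled by the same estimate truncated, since the order of $S$ on that factor is at most $4$. Once $2u=0$ is established the contradiction is immediate, and the proof is complete.
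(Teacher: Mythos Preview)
Your strategy differs from the paper's. The paper does not bound the exponent of $\Hol(\bbZ/4\times\bbZ/2^{m})$ directly; instead it invokes the reduction homomorphism $f:G\to\Hol(\bbZ/4\times\bbZ/8)$ from \cite[Section~7]{Byott} satisfying $|G/\ker f|\ge 2^{5}$, and observes that an element $x\in G$ of order $2^{m+1}$ must have image of order at least $16$ in the target, contradicting \cite[Lemma~7.1]{Byott}. Your plan is instead to show directly that no element of $\Hol(\bbZ/4\times\bbZ/2^{m})$ has order $2^{m+1}$, for every $m\ge 3$ --- a more self-contained statement. That route is reasonable, but as written it has two concrete gaps.

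First, the ``elementary fact'' $\sum_{j=0}^{2^{k}-1}d^{j}\equiv 0\pmod{2^{k+1}}$ for odd $d$ is false: for $d\equiv 1\pmod 4$ (e.g.\ $d=1$) the sum is $\equiv 2^{k}\pmod{2^{k+1}}$, not $0$ (compare Lemma~\ref{lem:2}). So the step ``$\sum_{j<2^{m-1}}d^{j}\equiv 0\pmod{2^{m}}$, which kills $u_{2}$'' does not hold as stated. (The weaker congruence $\equiv 0\pmod{2^{m-1}}$ is true and would suffice for $2u_{2}=0$ \emph{if} the off-diagonal contributions were controlled --- but see the next point.)

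Second, and more damaging, the decomposition ``$S=I+2^{m-2}M$'' is wrong. Reducing to an upper unipotent modulo~$2$ only says $S\pmod 2=\left(\begin{smallmatrix}1&\bar b\\0&1\end{smallmatrix}\right)$; the $(1,2)$-entry $b\in\bbZ/4$ may be odd, so $S-I$ need not be divisible by $2$, let alone by $2^{m-2}$. Hence neither your binomial expansion of $\sum_{j}S^{j}$ nor the reduction of the second coordinate of $u$ to the scalar sum $\sum_{j}d^{j}\,v_{2}$ is justified: the entries $b$ and $2^{m-2}c$ mix the two coordinates in every power $S^{j}$, and you have not controlled their contribution to $u$. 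To make the direct approach work you would need an explicit matrix computation of $X^{2},X^{4},\ldots$ tracking congruences modulo increasing powers of $2$, in the spirit of the paper's argument for the rank-$3$ and rank-$4$ cases in Theorem~\ref{thm:additive}; that is feasible but is precisely the work your proposal skips.
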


\begin{proof}
Suppose $G$ is a regular subgroup of $\Hol(\bbZ/4\times\bbZ/2^{m})$
of size $2^{m+2}$. By the arguments of \cite[Section 7]{Byott},
there is a homomorphism $f:G\to\Hol(\bbZ/4\times\bbZ/8)$ such that
$|\frac{G}{\ker f}|\ge2^{5}$. To the contrary, suppose there is an
element $x\in G$ such that $\mathsf{o}(x)=2^{m+1}$. Let $\mathsf{o}(x\ker f)=2^{r}$,
and consider the subgroup $H=\langle x^{2^{r}}\rangle$ of $\ker f$.
Since $|G/H|=2^{r+1}\ge2^{5}$, we obtain $r\ge4$. Thus $G/\ker f$
contains an element of order $16$, which is not possible by \cite[Lemma 7.1]{Byott}. 
\end{proof}
\begin{cor}
There is no regular subgroup of type $M_{2^{m+2}}$ in $\Hol(\bbZ/4\times\bbZ/2^{m})$
for $m\ge3$.
\end{cor}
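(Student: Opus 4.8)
The plan is to simply invoke the preceding Proposition. First I would observe that every regular subgroup of $\Hol(\bbZ/4\times\bbZ/2^{m})$ has order equal to $|\bbZ/4\times\bbZ/2^{m}|=2^{m+2}$, since by definition a regular subgroup of $\Hol(N)$ is in bijection with the underlying abelian group $N$ via the first coordinate. Next, from the presentation \eqref{p1} the group $M_{2^{m+2}}$ has order $2^{m+2}$, and its generator $a$ has order $2^{m+1}$; in particular $M_{2^{m+2}}$ contains an element of order $2^{m+1}$.

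Thus, for $m\ge3$, the group $G=M_{2^{m+2}}$ satisfies all the hypotheses of the Proposition above: it has size $2^{m+2}$ and contains an element of order $2^{m+1}$. Applying that Proposition, $G$ cannot be a regular subgroup of $\Hol(\bbZ/4\times\bbZ/2^{m})$, which is precisely the assertion of the Corollary. I do not expect any genuine obstacle here — the only thing to verify is the elementary bookkeeping that the orders match, so that the Proposition applies to $G=M_{2^{m+2}}$ verbatim.
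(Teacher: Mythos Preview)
Your proposal is correct and is exactly the intended argument: the Corollary is an immediate specialization of the preceding Proposition, and the paper gives no separate proof for it. The only content is checking that $M_{2^{m+2}}$ has order $2^{m+2}$ and contains an element of order $2^{m+1}$, which you have done.
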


The following example confirms the existence of regular subgroups
of type $M_{2^{m+2}}$ in $\Hol(\bbZ/2\times\bbZ/2^{m+1})$.
\begin{example*}
\noindent Consider the groups $A=M_{2^{m+2}}$ and $N=\bbZ/2\times\bbZ/2^{m+1}$
for $m\ge3$. From remark \ref{rem: braces as 1-cocycles and regular subgroups of Hol},
it is enough to provide a bijective map $\gamma:A\to N$ and a right
action $\rho:A\to\Aut(N)$ such that with respect to $\rho$, $\gamma$
becomes a bijective right $1$-cocycle. Consider (with respect to
the presentation \eqref{p1} of $A$) 
\begin{align*}
\rho(a) & =\begin{pmatrix}1 & 0\\
0 & 1
\end{pmatrix},\qquad\rho(b)=\begin{pmatrix}1 & 0\\
0 & 1+2^{m}
\end{pmatrix};\\
\gamma(a^{i}) & =\begin{pmatrix}0\\
i
\end{pmatrix},\hspace{0.2cm}\qquad\gamma(ba^{i})=\begin{pmatrix}1\\
i
\end{pmatrix}\qquad\text{ for }0\le i\le2^{m+1}-1.
\end{align*}
One can easily verify (see also Proposition \ref{prop:2}) that $\gamma$
becomes a bijective right $1$-cocycle with respect to the $\rho$.
\end{example*}
We now prove the main result of this section following \cite[Section 8 and 9]{Byott}
only making necessary changes suited to our case. 
\begin{thm}
\label{thm:additive} The additive structure of a non-cyclic MMC brace
of size $2^{m+2}$ for $m\ge3$ is $\bbZ/2\times\bbZ/2^{m+1}$. 
\end{thm}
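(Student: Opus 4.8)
The plan is to mimic the proof of the Corollary on $\bbZ/4\times\bbZ/2^{m}$ above. By Proposition~\ref{prop: all_possible_additive_groups} the additive group of a non-cyclic MMC brace of size $2^{m+2}$ is one of $\bbZ/2\times\bbZ/2^{m+1}$, $\bbZ/4\times\bbZ/2^{m}$, $N_{1}:=\bbZ/2\times\bbZ/2\times\bbZ/2^{m}$ or $N_{2}:=\bbZ/2\times\bbZ/2\times\bbZ/2\times\bbZ/2^{m-1}$. The first is realised by the example above exhibiting a regular $M_{2^{m+2}}$-subgroup of $\Hol(\bbZ/2\times\bbZ/2^{m+1})$, and the second is excluded by the Corollary above ruling out $\bbZ/4\times\bbZ/2^{m}$, so it remains only to rule out $N_{1}$ and $N_{2}$; by Remark~\ref{rem: braces as 1-cocycles and regular subgroups of Hol} this means showing $\Hol(N_{i})$ has no regular subgroup isomorphic to $M_{2^{m+2}}$. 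Since $M_{2^{m+2}}$ has an element of order $2^{m+1}$, namely $a$, it is enough to prove that neither $\Hol(N_{1})$ nor $\Hol(N_{2})$ contains an element of order $2^{m+1}$.

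Following \cite[Sections~8 and~9]{Byott}, I would reduce each of these to a small holomorph. The subgroup $8N_{1}$ (resp.\ $4N_{2}$) is characteristic, so reduction modulo it induces a group homomorphism $f$ from $\Hol(N_{1})$ to $\Hol(\bbZ/2\times\bbZ/2\times\bbZ/8)$ (resp.\ from $\Hol(N_{2})$ to $\Hol(\bbZ/2\times\bbZ/2\times\bbZ/2\times\bbZ/4)$). If $G\le\Hol(N_{i})$ were regular, then $f(G)$ would act transitively on the quotient group $N_{1}/8N_{1}\cong\bbZ/2\times\bbZ/2\times\bbZ/8$ (resp.\ $N_{2}/4N_{2}\cong\bbZ/2\times\bbZ/2\times\bbZ/2\times\bbZ/4$), which has order $2^{5}$, so $|f(G)|\ge2^{5}$. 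Hence, if some $x\in G$ had order $2^{m+1}$, then putting $o(f(x))=2^{r}$ and $H=\langle x^{2^{r}}\rangle\le\ker f$ we would get $2^{r+1}=|G/H|\ge|f(G)|\ge2^{5}$, so $r\ge4$ and $f(G)$ would contain an element of order exactly $16$.

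So the crux is: $\Hol(N)$ has no element of order $16$, for $N=\bbZ/2\times\bbZ/2\times\bbZ/8$ and $N=\bbZ/2\times\bbZ/2\times\bbZ/2\times\bbZ/4$, the two abelian groups of order $32$ appearing above. Suppose $(v,S)\in\Hol(N)$ has order $16$. Then $S^{16}=1$, and since no abelian group of order $32$ has an automorphism of order $16$ (\cite{Berko}, exactly as used in the proof of Corollary~\ref{cor:cor 2.2}), we get $o(S)\mid8$, so $S^{8}=1$. Conjugating $(v,S)$ by an automorphism of $N$ — which changes neither $N$ nor the order of $(v,S)$ — I may assume, by the last assertion of Proposition~\ref{prop:Aut(N) for all possible Ns}, that $S$ reduces mod $2$ to an upper unipotent matrix, i.e.\ $S=I+E$ with $E$ strictly upper triangular mod $2$; in particular $E^{k}\equiv0\pmod2$ for $k\ge3$ (resp.\ $k\ge4$). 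From $(v,S)^{8}=\bigl(N_{S}v,\,S^{8}\bigr)$ with $N_{S}:=\sum_{j=0}^{7}S^{j}=\sum_{k\ge0}\binom{8}{k+1}E^{k}$ and $S^{8}=1$, it suffices to prove $N_{S}=0$ in $\End(N)$, for then $(v,S)^{8}=(0,1)$, contradicting $o(v,S)=16$. Now $\binom{8}{1}=8$ annihilates the $k=0$ term, $\binom{8}{k+1}$ is divisible by $8$ for $k=2,4,6$ and by $4$ for $k=1,3,5$, and using the explicit description of $S$ from Proposition~\ref{prop:Aut(N) for all possible Ns} (so that the bottom-row entries of $E$, hence of each $E^{k}$ with $k\ge1$, are all even) one verifies, entry by entry under the modular identifications of Section~4, that every summand $\binom{8}{k+1}E^{k}$ vanishes in $\End(N)$. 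Hence $N_{S}=0$, which gives the contradiction and proves the theorem.

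The only genuine work is the entry-by-entry valuation check in the last paragraph: just the powers $E,E^{2},E^{3}$ — and $E^{4}$ in the rank-four case — of two fixed families of modular matrices are involved, but the bookkeeping has to be done carefully. This is precisely the computation performed, for the dihedral and quaternion adjoint groups, in \cite[Sections~8 and~9]{Byott}; the hypothesis $m\ge3$ enters exactly here, since it guarantees that the two reductions land in these order-$32$ holomorphs (whereas for $m=2$ the analogous holomorph does contain elements of order $2^{m+1}=8$).
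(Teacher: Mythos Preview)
Your strategy is correct and is a mild variant of the paper's. Whereas the paper works directly in $\Hol(N_i)$ for general $m$ --- computing $X^{2}$, $X^{4}$ explicitly, observing $X^{4}\equiv I\pmod 4$, and then inducting to $X^{2^{m}}=I$ --- you first reduce to $m=3$ via the quotient homomorphism (exactly the trick used in the paper for $\bbZ/4\times\bbZ/2^{m}$) and then rule out elements of order $16$ in the two fixed holomorphs of order-$32$ groups. The endpoint computation is the same in spirit: showing $(v,S)^{8}=1$ by analysing the matrix $S$ modulo suitable powers of $2$. Your route isolates the hard step as a single finite check, at the cost of the extra reduction lemma; the paper's route handles all $m$ at once but carries the parameter through the matrix arithmetic.

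One slip to flag: your claim that $\binom{8}{k+1}$ is divisible by $4$ for $k=3$ is false, since $\binom{8}{4}=70=2\cdot 35$. This does not actually break the argument --- the term $70\,E^{3}$ still vanishes in $\End(N)$ because the bottom-row entries of $E^{3}$ carry an extra factor of $2$ (coming from the fact that the bottom-row entries of $E$ itself are all even) --- but the divisibility bookkeeping as you have written it is incorrect. The honest fix is precisely the ``entry-by-entry valuation check'' you defer to in your final paragraph; once that is done carefully (or once one simply invokes the paper's/Byott's explicit computation of $X^{4}$ and $X^{8}$ at $m=3$), your proof is complete.
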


\begin{proof}
It now remains to be shown, by the above Corollary and Proposition
\ref{prop: all_possible_additive_groups}, that there is no regular
subgroup of type $M_{2^{m+2}}$ in $\Hol(N)$ if $N=\bbZ/2\times\bbZ/2\times\bbZ/2^{m}$
or $N=\bbZ/2\times\bbZ/2\times\bbZ/2\times\bbZ/2^{m-1}$ for $m\geq3.$ 

At first, we let $N=\bbZ/2\times\bbZ/2\times\bbZ/2^{m}$. To the contrary
suppose $M_{2^{m+2}}=\left\langle X,Y|X^{2^{m+1}}=1=Y^{2},\,\,YXY=X^{2^{m}+1}\right\rangle $
be a regular subgroup of $\Hol(N)$. Write $X$ as 
\[
\left(\begin{array}{cc}
S & v\\
0 & 1
\end{array}\right)
\]
for some $S$ in $\Aut(N)$ and $v\in N$ (written as a column). By
Proposition \ref{prop:the seven relations} we know that $S$ lies
in a Sylow $2$-subgroup of $\Aut(N).$ By conjugating $A$ with some
$C$ in $\Aut(N)$ (and then conjugating $M_{2^{m+2}}$ by $\left(\begin{array}{cc}
C & 0\\
0 & 1
\end{array}\right)\in\Hol(N)$) we can assume that $S({\rm mod}\,2)$ is upper unipotent (see Proposition
\ref{prop:Aut(N) for all possible Ns}). So we can write 
\[
X=\left(\begin{array}{cccc}
1 & a & b & v_{1}\\
0 & 1 & c & v_{2}\\
2^{m-1}d & 2^{m-1}e & \alpha & v_{3}\\
0 & 0 & 0 & 1
\end{array}\right)
\]
where $a,b,c\in\bbZ/2$ , $d,e\in\bbZ/2^{m}$ , $\alpha\in(\bbZ/2^{m})^{\times}$
, $v_{1},v_{2}\in\bbZ/2$, and $v_{3}\in\bbZ/2^{m}$. Now we compute
the powers of $X$ by first multiplying the matrices treating the
entries as integers and then we reduce the first two rows mod $2$
and the third mod $2^{m}$. We obtain 
\[
X^{2}=\left(\begin{array}{cccc}
1 & 0 & ac & av_{2}+bv_{3}\\
0 & 1 & 0 & cv_{3}\\
0 & 2^{m-1}ad & 2^{m-1}(bd+ce)+\alpha^{2} & 2^{m-1}dv_{1}+2^{m-1}ev_{2}+(1+\alpha)v_{3}\\
0 & 0 & 0 & 1
\end{array}\right).
\]
Again repeating the same strategy, we get 
\[
X^{4}=\left(\begin{array}{cccc}
1 & 0 & 0 & 0\\
0 & 1 & 0 & 0\\
0 & 0 & \alpha^{4} & 2^{m-1}acdv_{3}+(1+\alpha)(1+\alpha^{2})v_{3}\\
0 & 0 & 0 & 1
\end{array}\right).
\]
Now since $\alpha\in(\bbZ/2^{m})^{\times}$ so $\alpha$ is represented
by an odd integer, and hence $\alpha^{4}\equiv1({\rm mod}\,4)$. Also
$m>2$ and $(1+\alpha)(1+\alpha^{2})$ is divisible by $4$. Hence
treating $X^{4}$ as a matrix with integer entries we have 
\[
X^{4}\equiv I({\rm mod}\,4).
\]
We claim that $X$ satisfies 
\[
X^{2^{\ell}}\equiv I({\rm mod}\,2^{\ell}),\quad\quad\text{for all }\ell\geq2.
\]
This can be shown easily by an induction argument, which we omit.
Therefore we have shown that $X^{2^{m}}=I$ in $\Hol(N)$ since $m>2,$
which contradicts the fact that order of $X$ is $2^{m+1}$.

Next, we let $N=\bbZ/2\times\bbZ/2\times\bbZ/2\times\bbZ/2^{m-1}$.
As in the previous case, we proceed by contradiction. We suppose $M_{2^{m+2}}$
is a regular subgroup of $\Hol(N)$. Then we can write 
\[
X=\left(\begin{array}{ccccc}
1 & a & b & c & v_{1}\\
0 & 1 & d & e & v_{2}\\
0 & 0 & 1 & f & v_{3}\\
2^{m-2}g & 2^{m-2}h & 2^{m-2}i & \alpha & v_{4}\\
0 & 0 & 0 & 0 & 1
\end{array}\right).
\]
Then we compute $X^{2}$ by treating integer entries and next reduce
the first three rows mod $2$ and the fourth row mod $2^{m-1}$: 
\[
X^{2}=\left(\begin{array}{ccccc}
1 & 0 & ad & ae+bf & av_{1}+bv_{2}+cv_{3}\\
0 & 1 & 0 & df & dv_{3}+ev_{4}\\
0 & 0 & 1 & 0 & fv_{4}\\
0 & 2^{m-2}ga & 2^{m-2}(gb+hd) & \alpha^{2}+2^{m-2}(gc+he+if) & (1+\alpha)v_{4}+2^{m-2}(gv_{1}+hv_{2}+iv_{3})\\
0 & 0 & 0 & 0 & 1
\end{array}\right).
\]
Squaring and reducing again we get 
\[
X^{4}=\left(\begin{array}{ccccc}
1 & 0 & 0 & 0 & adfv_{4}\\
0 & 1 & 0 & 0 & 0\\
0 & 0 & 1 & 0 & 0\\
0 & 0 & 0 & \alpha^{4}+2^{m-2}gadf & (1+\alpha)(1+\alpha^{2})v_{4}+2^{m-2}\ell\\
0 & 0 & 0 & 0 & 1
\end{array}\right)
\]
for some integer $\ell$. Now when $m=3$ we compute 
\begin{align*}
X^{8}= & \left(\begin{array}{ccccc}
1 & 0 & 0 & 0 & 2adfv_{4}\\
0 & 1 & 0 & 0 & 0\\
0 & 0 & 1 & 0 & 0\\
0 & 0 & 0 & \alpha^{8}+4\alpha gadf & (\alpha^{4}+2gadf+1)((1+\alpha)(1+\alpha^{2})v_{4}+2\ell)\\
0 & 0 & 0 & 0 & 1
\end{array}\right)\\
= & \left(\begin{array}{ccccc}
1 & 0 & 0 & 0 & 0\\
0 & 1 & 0 & 0 & 0\\
0 & 0 & 1 & 0 & 0\\
0 & 0 & 0 & 1 & 0\\
0 & 0 & 0 & 0 & 1
\end{array}\right)\qquad(\text{reducing the first row mod \ensuremath{2}, fourth row mod \ensuremath{4}})
\end{align*}
where we observe that in the last equality we have used the fact that
$\alpha$ is odd and so $\alpha^{4}\equiv1(\text{mod }4)$ and $(1+\alpha)(1+\alpha^{2})\equiv0({\rm mod}\,4)$.
Hence when $m=3$ we have $X^{8}=I$ in $\Hol(N)$ which contradicts
that $X\in M_{32}$ has order $16$. But when $m\geq4$ we have $X^{8}\equiv I({\rm mod}\,4)$
and as in the first case an induction argument gives $X^{2^{k}}\equiv1({\rm mod}\,2^{k-1})$
for every integer $k\geq3$. This implies $X^{2^{m}}=I$ in $\Hol(N)$
which contradicts that $X$ has order $2^{m+1}.$ This completes the
proof. 
\end{proof}

\section{When the additive group is $\protect\bbZ/2\times\protect\bbZ/2^{m+1}$}

In the previous section, we showed that the possible non-cyclic additive
structure of an MMC brace is $\bbZ/2\times\bbZ/2^{m+1}$. We first
give a criteria to test for the existence of MMC braces.
\begin{prop}
\label{prop:2} Let $(A,\circ)$ be a group with a presentation \ref{p1}
and $T,S\in\Aut(\bbZ/2\times\bbZ/2^{m+1})$ such that $S^{2^{m}}=T^{2}=I$
and $TST=S$. Further assume $\gamma:(A,\circ)\to\bbZ/2\times\bbZ/2^{m+1}$
be a bijective map such that $\gamma(1)=0$ and satisfies the following
for all $i,j,k$ 
\begin{align}
\gamma(a^{i+j}) & =S^{j}\gamma(a^{i})+\gamma(a^{j})\label{eq:pp1}\\
\gamma(ba^{i}) & =S^{i}\gamma(b)+\gamma(a^{i})\label{eq:pp2}\\
\gamma(a^{i}b) & =T\gamma(a^{i})+\gamma(b)\label{eq:pp3}\\
T\gamma(b) & =-\gamma(b)\label{eq:pp4}\\
\gamma(a^{2^{m}}) & =\begin{pmatrix}0\\
2^{m}
\end{pmatrix}.\label{eq:pp5}
\end{align}
Then 
\[
\rho:(A,\circ)\to\Aut(\bbZ/2\times\bbZ/2^{m+1}),\qquad\rho(a):=S,~\rho(b):=T
\]
defines a right action of $(A,\circ)$ on $\bbZ/2\times\bbZ/2^{m+1}$,
and with respect to this action $\gamma$ becomes bijective $1$-cocyle,
that is for all $x,y\in A$, $\gamma(xy)=\rho(y)(\gamma(x))+\gamma(y)$. 
\end{prop}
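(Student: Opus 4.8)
The plan is to prove the two assertions separately: first that $\rho$ is a well-defined right action, and then --- the bulk of the work --- that $\gamma$ satisfies the cocycle identity, checked directly on the four types of element pairs in $(A,\circ)$. For Step 1, I would observe that the assignment $a\mapsto S$, $b\mapsto T$ respects the defining relations of $M_{2^{m+2}}$: indeed $S^{2^{m+1}}=(S^{2^m})^2=I$ and $T^2=I$ by hypothesis, while $TST^{-1}=TST=S\cdot S^{2^m}=S^{1+2^m}$. Since $TST=S$ together with $T^2=I$ forces $ST=TS$, the image $\langle S,T\rangle$ is abelian, so this assignment is simultaneously a homomorphism and an anti-homomorphism $(A,\circ)\to\Aut(\bbZ/2\times\bbZ/2^{m+1})$; in particular it defines a right linear action $n\mapsto\rho(x)(n)$. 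For what follows I record three facts: in $(A,\circ)$ one has $a^ib=ba^{i(1+2^m)}$ (because $(1+2^m)^2\equiv1\pmod{2^{m+1}}$); the anti-homomorphism property gives $\rho(ba^i)=\rho(a^i)\rho(b)=S^iT$; and $S^{2^m}=I$ yields $S^{i2^m}=I$ and hence $S^{i(1+2^m)}=S^i$.

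The key computational step (Step 2) is a single identity. Put $c_i:=\gamma(a^{i2^m})$, so that $c_i=0$ for even $i$ and $c_i=\gamma(a^{2^m})=\begin{pmatrix}0\\2^m\end{pmatrix}$ for odd $i$, using \eqref{eq:pp5} and $\gamma(1)=0$. I would establish
\[
T\gamma(a^i)+\gamma(b)=S^i\gamma(b)+\gamma(a^i)+c_i\quad(\ast)
\]
by computing $\gamma(a^ib)$ in two ways. On one hand, \eqref{eq:pp3} gives $\gamma(a^ib)=T\gamma(a^i)+\gamma(b)$. On the other hand, rewriting $a^ib=ba^{i(1+2^m)}$ and applying \eqref{eq:pp2} gives $\gamma(a^ib)=S^{i(1+2^m)}\gamma(b)+\gamma(a^{i(1+2^m)})=S^i\gamma(b)+\gamma(a^{i(1+2^m)})$, while splitting $i(1+2^m)=i+i2^m$ in \eqref{eq:pp1} gives $\gamma(a^{i(1+2^m)})=S^{i2^m}\gamma(a^i)+\gamma(a^{i2^m})=\gamma(a^i)+c_i$. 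Equating the two expressions yields $(\ast)$.

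For Step 3, I would verify $\gamma(xy)=\rho(y)(\gamma(x))+\gamma(y)$ case by case, according as $x$ and $y$ each lie in $\{a^i\}$ or $\{ba^i\}$. The pair $(a^i,a^j)$ is exactly \eqref{eq:pp1}. For $(ba^i,a^j)$, one has $xy=ba^{i+j}$ and both sides expand via \eqref{eq:pp2} and \eqref{eq:pp1} to the same expression. For $(a^i,ba^j)$, one has $xy=ba^{i(1+2^m)+j}$; expanding $\gamma(xy)$ with \eqref{eq:pp2}, \eqref{eq:pp1} and the Step-2 formula $\gamma(a^{i(1+2^m)})=\gamma(a^i)+c_i$, and expanding the right-hand side with $\rho(ba^j)=S^jT$ and \eqref{eq:pp2}, one finds that after cancelling $\gamma(a^j)$ and applying $S^{-j}$ both sides reduce to $(\ast)$. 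For $(ba^i,ba^j)$, one has $xy=a^{i(1+2^m)+j}$; here $T\gamma(ba^i)$ is first rewritten via \eqref{eq:pp2}, $ST=TS$ and \eqref{eq:pp4} as $-S^i\gamma(b)+T\gamma(a^i)$, and then the two sides again collapse to $(\ast)$ after the analogous cancellation. Since $\gamma$ is bijective by hypothesis, this shows $\gamma$ is a bijective right $1$-cocycle. No conceptual obstacle arises; the one delicate point is the bookkeeping in Step 2 and in the last two cases above --- keeping straight the parity-dependent term $c_i$, the rewriting $a^ib=ba^{i(1+2^m)}$, and the cancellations coming from $S^{2^m}=I$, $ST=TS$ and $T\gamma(b)=-\gamma(b)$.
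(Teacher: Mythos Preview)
Your proof is correct and follows essentially the same strategy as the paper: verify that $\rho$ respects the defining relations of $M_{2^{m+2}}$, then check the cocycle identity case by case on the four types of pairs $(a^i,a^j)$, $(ba^i,a^j)$, $(a^i,ba^j)$, $(ba^i,ba^j)$. Your isolation of the auxiliary identity $(\ast)$ is a slightly cleaner organization than the paper's (which computes the $(ba^i,ba^j)$ case directly and invokes the invariance $U\gamma(a^{2^m i})=\gamma(a^{2^m i})$), but the underlying argument is the same.
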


\begin{proof}
We have $(S^{-1})^{2^{m+1}}=(T^{-1})^{2}=I$ and $T^{-1}S^{-1}T^{-1}=(S^{-1})^{2^{m}+1}$,
thus $\rho^{-1}$ (where $\rho^{-1}(x):=\rho(x)^{-1}$) defines an
left action. This yields that $\rho$ is a right action. Clearly for
any automorphism $U\in\Aut(\Z/2\times\Z/2^{m+1})$, we have 
\begin{equation}
U\gamma(a^{2^{m}i})=\gamma(a^{2^{m}i}).\label{eq:pp6}
\end{equation}
Using Eqs. \eqref{eq:pp1}-\eqref{eq:pp4}, \eqref{eq:pp6} and presentation
\eqref{p1}, we get 
\begin{align*}
\gamma((ba^{i})(ba^{j})) & =\gamma(a^{2^{m}i+i+j})=S^{2^{m}i}\gamma(a^{i+j})+\gamma(a^{2^{m}i})\\
 & =\gamma(a^{i+j})+\gamma(a^{2^{m}i}),\\
S^{j}T\gamma(ba^{i})+\gamma(ba^{j}) & =S^{j}T\gamma(a^{2^{m}i+i}b)+\gamma(ba^{j})\\
 & =S^{j}T(T(S^{2^{m}i}\gamma(a^{i})+\gamma(a^{2^{m}i}))+\gamma(b))+\gamma(ba^{j})\\
 & =S^{j}\gamma(a^{i})+S^{j}\gamma(a^{2^{m}i})-S^{j}\gamma(b)+S^{j}\gamma(b)+\gamma(a^{j})\\
 & =S^{j}\gamma(a^{i})+\gamma(a^{j})+\gamma(a^{2^{m}i})=\gamma(a^{i+j})+\gamma(a^{2^{m}i}).
\end{align*}
Using the similar arguments, one can show 
\begin{align*}
\gamma((ba^{i})(a^{j})) & =S^{j}\gamma(ba^{i})+\gamma(a^{j})\\
\gamma((a^{i})(ba^{j})) & =S^{j}T\gamma(a^{i})+\gamma(ba^{j}).
\end{align*}
Hence we have obtained $\gamma(xy)=\rho(y)\gamma(x)+\gamma(y)$ for
all $x,y\in A$.
\end{proof}
We now shift our focus to counting the number of MMC braces of a fixed
size. While the adjoint group of a brace can be viewed as a regular
subgroup in the holomorph of its additive group, the representation
of an element $x\in(A,\circ)$ as a matrix containing $\rho(x)$ and
$\gamma(x)$ introduces computational complexities (see Remark \ref{rem: braces as 1-cocycles and regular subgroups of Hol},
Section $3$). To remove these difficulties, we consider the components
of these matrices separately; in other words, we adopt the approach
of viewing braces as bijective $1$-cocycles, a method previously
utilized in \cite{JAlg,Rump20}.

We consider $(A,\circ)$ to be the group with presentation \ref{p1}.
We denote the bijective $1$-cocycle by $\gamma:(A,\circ)\to\bbZ/2\times\bbZ/2^{m+1}$
and by the proof of the Proposition \ref{prop:Aut(N) for all possible Ns},
the right action $\rho$ of the generators $a$ and $b$ can be written
by the invertible matrices 
\begin{equation}
\rho(a)=\sigma(a)^{-1}=S:=\begin{pmatrix}1 & y\\
2^{m}x & 1+2z
\end{pmatrix},\qquad\rho(b)=\sigma(b)^{-1}=T:=\begin{pmatrix}1 & v\\
2^{m}u & 1+2w
\end{pmatrix}\label{eq:mat-1}
\end{equation}
with $x,y,u,v\in\bbZ/2$ and $z,w\in\bbZ/2^{m}$.

At first we write down some technical lemmas.
\begin{lem}
Let $k\ge1$ and $\beta:=1+2z$, then we have\emph{ : }
\begin{align}
S^{n}=\begin{cases}
\begin{pmatrix}1 & 0\\
0 & \beta^{4k}
\end{pmatrix} & \text{ if }n=4k\\
\begin{pmatrix}1 & y\\
2^{m}x & \beta^{4k+1}
\end{pmatrix} & \text{ if }n=4k+1\\
\begin{pmatrix}1 & 0\\
0 & 2^{m}xy+\beta^{4k+2}
\end{pmatrix} & \text{ if }n=4k+2\\
\begin{pmatrix}1 & y\\
2^{m}x & 2^{m}xy+\beta^{4k+3}
\end{pmatrix} & \text{ if }n=4k+3.
\end{cases}\label{eq:g11}
\end{align}
\end{lem}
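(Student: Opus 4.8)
The plan is to compute the powers of the matrix
\[
S=\begin{pmatrix}1 & y\\ 2^{m}x & 1+2z\end{pmatrix}
\]
directly, exploiting the arithmetic that $x,y\in\bbZ/2$ and the entries live in $\bbZ/2\times\bbZ/2^{m+1}$, so that in particular $2^{m}xy$ is a $2$-torsion element and $2^{m+1}=0$. First I would record the base cases. Squaring gives
\[
S^{2}=\begin{pmatrix}1+2^{m}xy & y(2+2z)\\ 2^{m}x(2+2z) & 2^{m}xy+\beta^{2}\end{pmatrix},
\]
and since $y\in\bbZ/2$ the top-right entry $y(2+2z)=2y(1+z)$ is an even multiple of $y$; as it sits in $\bbZ/2$ this is $0$; similarly $2^{m}x(2+2z)=2^{m+1}x(1+z)=0$ in $\bbZ/2^{m+1}$, and $1+2^{m}xy\equiv 1$ in $\bbZ/2$. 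Hence $S^{2}=\begin{pmatrix}1 & 0\\ 0 & 2^{m}xy+\beta^{2}\end{pmatrix}$, matching the claimed $n=4k+2$ shape with $k=0$. Squaring once more and using $(2^{m}xy)^{2}=2^{2m}x^{2}y^{2}=0$ (as $2m\ge m+1$ for $m\ge1$) together with the fact that the cross term $2\cdot 2^{m}xy\cdot\beta^{2}=2^{m+1}xy\beta^{2}=0$, we get $S^{4}=\begin{pmatrix}1 & 0\\ 0 & \beta^{4}\end{pmatrix}$, the claimed $n=4$ case.

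Next I would set up an induction on $k$ with the four residues of $n$ modulo $4$ handled in one sweep. The key structural observation is that $S^{4}$ is diagonal with top-left entry $1$ and bottom-right entry $\beta^{4}$, so right-multiplication by $S^{4}$ simply multiplies the bottom row (equivalently the second column's lower entries) by $\beta^{4}$ and fixes everything involving only the $\bbZ/2$-coordinate. Concretely, from $S^{n}$ one obtains $S^{n+4}=S^{n}S^{4}$, and since the $(1,1)$ and $(1,2)$ entries of $S^{4}$ are $1$ and $0$ while the $(2,2)$ entry is $\beta^{4}$ and the $(2,1)$ entry is $0$, the first row of $S^{n+4}$ equals the first row of $S^{n}$, while the second row of $S^{n+4}$ is $\beta^{4}$ times the second row of $S^{n}$. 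Writing the second row of $S^{n}$ as $(2^{m}x\epsilon_n,\ c_n)$ where $\epsilon_n\in\{0,1\}$ depending on the residue, and noting $\beta^{4}\cdot 2^{m}x = 2^{m}x$ in $\bbZ/2^{m+1}$ because $\beta$ is odd (so $\beta^{4}=1+2^{m+1}(\cdots)$ acts trivially on $2^{m}x$), the lower-left entry is unchanged, and the lower-right entry becomes $\beta^{4}c_n$. Carrying this through the four residue classes, using $\beta^{4}\beta^{4k+r}=\beta^{4(k+1)+r}$ and $\beta^{4}\cdot 2^{m}xy=2^{m}xy$ (again since $\beta$ is odd and $2^{m}xy$ is $2^{m}$-torsion), reproduces exactly the claimed formulas with $k$ replaced by $k+1$.

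I would present the argument by stating the base cases $S^{4},S^{5},S^{6},S^{7}$ explicitly (each a direct one-line product from the $S^{n}$ with $n\le 3$ already computed, reduced using $y,x\in\bbZ/2$ and $2^{m+1}=0$), and then giving the one-step induction $S^{n+4}=S^{n}S^{4}$ exactly as above. The only mildly delicate point — and the place a reader could get tripped up — is the bookkeeping of which even multiples vanish: one must keep straight that in $\bbZ/2^{m+1}$ any term of the form $2^{m+1}(\text{integer})$ is zero, that $y(2+2z)$ reduces to $0$ in the $\bbZ/2$-slot, and that $\beta$ being a unit congruent to $1$ mod $2$ forces $\beta^{4}\equiv 1 \pmod{2^{m+1}/2^{m}}$ in the precise sense that $\beta^{4}\cdot 2^{m}t=2^{m}t$ for all $t$. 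None of this is deep, so I do not expect a genuine obstacle; the main care is purely in the modular reductions, and the cleanest exposition is to do the four base computations honestly and then let the diagonal form of $S^{4}$ drive the induction.
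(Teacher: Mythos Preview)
Your proposal is correct and follows exactly the approach the paper takes: the paper's proof is literally the single sentence ``This is direct computation,'' and you have simply written out that computation in detail, first reducing $S^{2}$ and $S^{4}$ and then pushing through by induction using that $S^{4}$ is diagonal. One small expositional wrinkle: when you write $S^{n+4}=S^{n}S^{4}$ and then describe the effect as scaling the \emph{rows}, that is the effect of left-multiplying by ${\rm diag}(1,\beta^{4})$ rather than right-multiplying --- it makes no difference here since $S^{4}$ commutes with $S^{n}$ and your entry-by-entry conclusions (e.g.\ $\beta^{4}\cdot 2^{m}x=2^{m}x$, $y\beta^{4}\equiv y$ in $\bbZ/2$) are all correct, but you may want to tidy that sentence before submission.
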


\begin{proof}
This is direct computation.
\end{proof}
\begin{lem}
\label{lem:2} Let $k$ be an integer, then there are integers $k_{1},k_{2}$
such that for $m\ge2$, 
\begin{align*}
(4k+1)^{2^{m}-1}+(4k+1)^{2^{m}-2}+\cdots+(4k+1)+1 & =2^{m+1}k_{1}+2^{m}\\
(4k+3)^{2^{m}-1}+(4k+3)^{2^{m}-2}+\cdots+(4k+3)+1 & =2^{m+1}k_{2}.
\end{align*}
In particular, let $\beta\in(\bbZ/2^{m+1})^{\times}$ \emph{with }$m\ge2$\emph{,}
and $x=\beta^{2^{m}-1}+\cdots+\beta+1$. Then we have \emph{:}
\[
x\equiv\begin{cases}
2^{m}\text{ \emph{mod} }\,2^{m+1} & \text{if }\,\beta\equiv1\text{ \emph{mod} }4,\\
0\text{ \emph{mod} }\,2^{m+1} & \text{if }\,\beta\equiv3\text{ \emph{mod }}4.
\end{cases}
\]
\end{lem}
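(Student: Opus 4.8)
The plan is to deduce both displayed identities from the single congruence stated in the ``in particular'' clause. Indeed, applying it with $\beta=4k+1$ and then with $\beta=4k+3$ (each of which is odd, hence a unit modulo $2^{m+1}$), the two equations say exactly that $x:=\sum_{j=0}^{2^{m}-1}\beta^{j}$ is congruent to $2^{m}$, respectively to $0$, modulo $2^{m+1}$; the integers $k_{1},k_{2}$ are then simply the resulting quotients. So it suffices to prove the case distinction for $x$, and I would phrase the whole lemma that way.

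The key step is the classical factorization
\[
\sum_{j=0}^{2^{m}-1}\beta^{j}=\prod_{i=0}^{m-1}\bigl(1+\beta^{2^{i}}\bigr),
\]
which I would establish by an easy induction on $m$ using $\sum_{j=0}^{2^{m}-1}\beta^{j}=\bigl(\sum_{j=0}^{2^{m-1}-1}\beta^{j}\bigr)\bigl(1+\beta^{2^{m-1}}\bigr)$. I would then read off the $2$-adic valuation of each factor. Since $\beta$ is odd, for every $i\ge 1$ the number $\beta^{2^{i}}$ is the square of an odd integer, hence $\beta^{2^i}\equiv 1\pmod 8$, so $1+\beta^{2^{i}}\equiv 2\pmod 8$; in particular each of these $m-1$ tail factors is divisible by $2$ but not by $4$. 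The leading factor $1+\beta$ is handled separately: if $\beta\equiv 1\pmod 4$ then $1+\beta$ is divisible by $2$ but not by $4$, whereas if $\beta\equiv 3\pmod 4$ then $4\mid 1+\beta$.

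Multiplying the $m$ factors, in the case $\beta\equiv 1\pmod 4$ the product $x$ has $2$-adic valuation exactly $1+(m-1)=m$, so $x\equiv 2^{m}\pmod{2^{m+1}}$, while in the case $\beta\equiv 3\pmod 4$ the valuation is at least $2+(m-1)=m+1$, so $x\equiv 0\pmod{2^{m+1}}$. With the substitutions above this yields the two displayed equations with explicit $k_{1},k_{2}$. (An alternative to the last two steps is to write $x=(\beta^{2^{m}}-1)/(\beta-1)$ and apply the lifting-the-exponent formula, giving $v_{2}(x)=v_{2}(\beta+1)+m-1$ directly.) The argument is pure elementary $2$-adic bookkeeping, so I do not anticipate a genuine obstacle; the only point that needs care is keeping the leading factor $1+\beta$ distinct from the tail factors $1+\beta^{2^{i}}$ with $i\ge 1$, since it is the \emph{exact} valuation of the product, not merely a lower bound, that pins down $x$ modulo $2^{m+1}$ in the case $\beta\equiv 1\pmod 4$.
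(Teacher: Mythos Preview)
Your proof is correct and is essentially the same approach as the paper's: both rest on the recursive factorization $\sum_{j=0}^{2^{m}-1}\beta^{j}=(1+\beta^{2^{m-1}})\sum_{j=0}^{2^{m-1}-1}\beta^{j}$, which the paper uses for an induction on $m$ while you unroll it into the full product $\prod_{i=0}^{m-1}(1+\beta^{2^{i}})$ and read off the $2$-adic valuation of each factor. Your version is more explicit (and the LTE alternative you mention is a fine shortcut), but the underlying idea is identical.
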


\begin{proof}
This result follows from an easy induction on $m$, and the fact that
for all $m\ge1$ 
\[
\beta^{2^{m}-1}+\cdots+\beta+1=(1+\beta^{2^{m-1}})(\beta^{2^{m-1}-1}+\cdots+\beta+1).
\]
\end{proof}
\begin{lem}
\label{lem:3} Let $\beta=4k+1$ for some $k\ge0$, then 
\[
(\beta^{n}+\cdots+\beta+1)\not\equiv0~(\text{ mod }2^{m-1})
\]
whenever $1\le n\le2^{m+1}-2$ and $n\neq2^{m}-1,~2^{m-1}-1,~2^{m}+2^{m-1}-1$.
And if $1\le n\le2^{m+1}-2$ and $n\neq2^{m}-1$, then 
\[
(\beta^{n}+\cdots+\beta+1)\not\equiv0~(\text{ mod }2^{m}).
\]
\end{lem}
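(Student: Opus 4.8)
The plan is to reduce both assertions to a single clean formula for the $2$-adic valuation of the geometric sum. Write $\Sigma(n):=\beta^{n}+\cdots+\beta+1$ and let $v_2$ denote the $2$-adic valuation. I would first prove that, for $\beta=4k+1$, one has $v_2(\Sigma(n))=v_2(n+1)$ for every $n\ge 0$. Granting this, Claim~1 becomes the statement that $v_2(\Sigma(n))\le m-2$ for all $n$ in the stated range outside the three exceptional values; equivalently, that the only $N=n+1$ with $2\le N\le 2^{m+1}-1$ and $v_2(N)\ge m-1$ are $N\in\{2^{m-1},\,2^{m},\,3\cdot 2^{m-1}\}$, which is immediate since the next multiple of $2^{m-1}$ is $2^{m+1}$, already out of range. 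In the same way Claim~2 becomes the statement that the only such $N$ with $v_2(N)\ge m$ is $N=2^{m}$. So, once the valuation formula is available, the rest is elementary bookkeeping.

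To establish $v_2(\Sigma(n))=v_2(n+1)$ I would use strong induction on $n$, splitting on the parity of $n$. If $n$ is even, then $\Sigma(n)$ is a sum of $n+1$ (an odd number of) odd terms, hence odd, so $v_2(\Sigma(n))=0=v_2(n+1)$. If $n=2N-1$ is odd, apply the factorization already used in Lemma~\ref{lem:2}:
\[
\Sigma(2N-1)=\sum_{j=0}^{N-1}\beta^{j}+\beta^{N}\sum_{j=0}^{N-1}\beta^{j}=(1+\beta^{N})\,\Sigma(N-1).
\]
Since $\beta=4k+1\equiv 1\pmod 4$ we have $\beta^{N}\equiv 1\pmod 4$, hence $v_2(1+\beta^{N})=1$; combining this with the inductive hypothesis $v_2(\Sigma(N-1))=v_2(N)$ gives $v_2(\Sigma(2N-1))=1+v_2(N)=v_2(2N)=v_2(n+1)$. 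The base case $n=0$ is trivial.

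The only point that genuinely needs care is the identity $v_2(1+\beta^{N})=1$, and this is exactly where the hypothesis $\beta\equiv 1\pmod 4$ (rather than merely $\beta$ odd) is used: had $\beta\equiv 3\pmod 4$ this step would fail and the conclusion would be different, mirroring the dichotomy in Lemma~\ref{lem:2}. Everything else --- the parity split, the inductive step, and the final enumeration of multiples of $2^{m-1}$ and of $2^{m}$ inside the interval $[2,\,2^{m+1}-1]$ --- is routine. (Alternatively, $v_2(\Sigma(n))=v_2(n+1)$ follows in one line from the lifting-the-exponent lemma applied to $\beta^{n+1}-1=(\beta-1)\Sigma(n)$, since $4\mid \beta-1$.)
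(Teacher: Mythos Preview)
Your argument is correct and cleaner than the one in the paper. Both proofs ultimately pivot on the identity $v_2(\Sigma(n))=v_2(n+1)$ for $\beta\equiv 1\pmod 4$, but they reach it by different mechanisms. The paper argues by contradiction: assuming $2^{m-1}\mid\Sigma(n)$, it uses $\beta^{n+1}-1=(\beta-1)\Sigma(n)$ together with $4\mid(\beta-1)$ to force $\beta^{n+1}=1$ in $(\bbZ/2^{m+1})^{\times}$, then writes $n+1=2^{r}s$ with $2^{r}=\mathsf o(\beta)$, factors $\Sigma(n)=s\,\Sigma(2^{r}-1)$, and invokes Lemma~\ref{lem:2} (with $r$ in place of $m$) to get $\Sigma(2^{r}-1)=2^{r}(\text{odd})$; this yields $v_2(\Sigma(n))=r+v_2(s)=v_2(n+1)$ and hence $2^{m-1}\mid(n+1)$. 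Your inductive parity split (or, equivalently, the lifting-the-exponent remark) obtains the same valuation formula directly, without passing through the multiplicative order of $\beta$ or appealing to Lemma~\ref{lem:2}; this avoids the small edge cases $r\le 1$ that the paper's use of Lemma~\ref{lem:2} technically requires handling separately, and it makes the final enumeration of admissible $n+1$ completely transparent. The trade-off is that the paper's route reuses machinery already in place, whereas your route establishes a slightly stronger standalone fact (the exact valuation for all $n$) at essentially no extra cost.
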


\begin{proof}
Let $\mathsf{o}(\beta)=2^{r}$ in $(\bbZ/2^{m+1})^{\times}$. If $(\beta^{n}+\cdots+\beta+1)\not\equiv0~(\text{ mod }2^{m-1})$,
then the relation 
\begin{align*}
(\beta^{n+1}-1) & =(\beta-1)(\beta^{n}+\cdots+\beta+1),
\end{align*}
gives $\beta^{n+1}=1$, that is $n=2^{r}s-1$ for some $s$. Now 
\begin{align*}
\beta^{n}+\cdots+\beta+1 & =\beta^{2^{r}s-1}+\beta^{2^{r}s-2}+\cdots+\beta+1\\
 & =\beta^{2^{r}(s-1)}(\beta^{2^{r}-1}+\beta^{2^{r}-2}+\cdots+1)+\beta^{2^{r}(s-1)-1}+\beta^{2^{r}(s-1)-2}+\cdots+1\\
 & =(\beta^{2^{r}-1}+\beta^{2^{r}-2}+\cdots+1)+\beta^{2^{r}(s-2)}(\beta^{2^{r}-1}+\cdots+1)+\beta^{2^{r}(s-2)-1}+\beta^{2^{r}(s-2)-2}+\cdots+1\\
 & =s(\beta^{2^{r}-1}+\beta^{2^{r}-2}+\cdots+\beta+1)\\
 & =s(2^{r+1}k'+2^{r})\qquad\text{ by Lemma }\ref{lem:2}.
\end{align*}
Let $s=2^{l}q$, where $q$ is odd. Thus we obtain 
\begin{align}
2^{r++l+1}k'q+2^{r+l}q=2^{m-1}k'',\label{eq:lem1}
\end{align}
since $2\le2^{r+l}q\le2^{m+1}-1$, modulo $2^{r+l+1}$ of \eqref{eq:lem1}
implies $n=2^{m-1}u-1$ for some $u$, which is not possible.\\
 When $(\beta^{n}+\cdots+\beta+1)\not\equiv0~(\text{ mod }2^{m})$,
the Eq. \eqref{eq:lem1} can be replaced by 
\begin{align*}
2^{r++l+1}k'q+2^{r+l}q=2^{m}k'',
\end{align*}
and modulo $2^{r+l+1}$ implies $r+l=m$, then $n=2^{m}-1$ which
is not possible. 
\end{proof}
We first strengthen Corollary \ref{cor:cor 2.2} in the proposition
below.
\begin{prop}
\label{prop:1} Let $A$ be a non-cyclic MMC brace of size $2^{m+2}$,
where $m\ge3$. Then $a^{2^{m-1}}\in\Soc(A)$. 
\end{prop}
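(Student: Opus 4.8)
The plan is to read the statement directly off the matrix description of the action. By Theorem~\ref{thm:additive} we may take $(A,+)=\bbZ/2\times\bbZ/2^{m+1}$, and then by \eqref{eq:mat-1} the automorphism $\rho(a)$ is the matrix $S=\begin{pmatrix}1 & y\\ 2^{m}x & 1+2z\end{pmatrix}$ for some $x,y\in\bbZ/2$ and $z\in\bbZ/2^{m}$. Since $\rho$ is an anti-homomorphism and $a^{2^{m-1}}$ is a power of a single element, $\rho(a^{2^{m-1}})=S^{2^{m-1}}$; and because $\Soc(A)=\ker\rho$, proving the proposition amounts to checking that $S^{2^{m-1}}=I$.

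First I would invoke \eqref{eq:g11}: as $m\ge 3$ the exponent $2^{m-1}$ is a multiple of $4$, namely $2^{m-1}=4\cdot 2^{m-3}$, so \eqref{eq:g11} gives
\[
S^{2^{m-1}}=\begin{pmatrix}1 & 0\\ 0 & \beta^{2^{m-1}}\end{pmatrix},\qquad\text{where }\beta:=1+2z\in(\bbZ/2^{m+1})^{\times}.
\]
Then it remains only to note that $\beta^{2^{m-1}}\equiv 1\pmod{2^{m+1}}$, which is immediate since $\beta$ is a unit modulo $2^{m+1}$ and the group $(\bbZ/2^{m+1})^{\times}\cong\bbZ/2\times\bbZ/2^{m-1}$ has exponent $2^{m-1}$ for $m\ge 2$. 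Hence $S^{2^{m-1}}=I$, i.e.\ $a^{2^{m-1}}\in\ker\rho=\Soc(A)$.

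I do not expect a genuine obstacle here: the result is a bookkeeping consequence of the explicit form of $\Aut(\bbZ/2\times\bbZ/2^{m+1})$ from Proposition~\ref{prop:Aut(N) for all possible Ns}(b) together with the power formula \eqref{eq:g11}. The two places to be slightly careful are that the restriction of $\rho$ to $\langle a\rangle$ is a homomorphism (so $\rho(a^{k})=S^{k}$), and that the hypothesis $m\ge 3$ is exactly what puts $2^{m-1}$ in the $n\equiv 0\pmod 4$ case of \eqref{eq:g11}; for $m=2$ one would land in the $n\equiv 2\pmod 4$ branch, where the lower-right entry is $2^{m}xy+\beta^{2}$ and need not equal $1$. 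It is also worth remarking that non-cyclicity of the additive group is essential: in the cyclic case $\rho(a)$ would be a unit in $(\bbZ/2^{m+2})^{\times}$, a group of exponent $2^{m}$, so only $a^{2^{m}}\in\Soc(A)$ would be forced, consistent with Corollary~\ref{cor:cor 2.2}.
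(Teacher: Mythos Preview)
Your proof is correct and follows essentially the same route as the paper: the paper's own argument is the one-line observation that $(1+2z)^{2^{m-1}}=1$ in $(\bbZ/2^{m+1})^{\times}$ together with the power formula \eqref{eq:g11} forces $S^{2^{m-1}}=I$. Your version simply unpacks this in more detail, and your side remarks on why $m\ge3$ and non-cyclicity are needed are accurate and helpful but not required for the argument.
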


\begin{proof}
This follows from the relation $(1+2z)^{2^{m-1}}=1$ in $(\Z/2^{m+1})^\times$, and using Eq \ref{eq:g11},
one can see that $S^{2^{m-1}}=1$ when $m\ge3$. 
\end{proof}
Now we carry out some reductions. From the relations $T^{2}=I,~TST=S$, we get 
\begin{align}
4w(w+1) & =2^{m}uv\label{eq:g1}\\
4w(1+w)(1+2z) & =2^{m}(uv+uy+xv),\label{eq:g2}
\end{align}
in $\Z/{2^{m+1}}$. The above two equation gives 
\begin{equation}
xv=yu ~\text{ in }\Z/2.
\end{equation}
We assume $\gamma(a)=\begin{pmatrix}p\\
q
\end{pmatrix}\text{ and }\gamma(b)=\begin{pmatrix}r\\
s
\end{pmatrix}.$ Thus the relation $\gamma(b^{2})=0$ gives 
\begin{align}
vs &=0  ~\text{ in }\Z/2\label{eq:g3}\\
2s(1+w) & =2^{m}ur ~\text{ in }\Z/{2^{m+1}}.\label{eq:g4}
\end{align}
Note that 

For $k\ge2$, $\gamma(a^{2})=\begin{pmatrix}yq\\
2^{m}xp+\beta q+q
\end{pmatrix}\text{ and }\gamma(a^{2^{k}})=\begin{pmatrix}0\\
(\beta^{2^{k}-1}+\cdots+\beta+1)q
\end{pmatrix},$ since $\gamma(a^{2^{m}})\neq0$ and $\gamma(a^{2^{m+1}})=1$, thus
\begin{equation}
(\beta^{2^{m}-1}+\cdots+\beta+1)q=2^{m},\qquad\gamma(a^{2^{m}})=\begin{pmatrix}0\\
2^{m}
\end{pmatrix}.\label{eq:g5}
\end{equation}
From the relation $bab=a^{2^{m}+1}$, we obtain 
\begin{align*}
\gamma(bab) & =\gamma(a^{2^{m}+1})\\
T\gamma(ba)+\gamma(b) & =S^{2^{m}}\gamma(a)+\gamma(a^{2^{m}})\\
TS\gamma(b)+T\gamma(a)+\gamma(b) & =\gamma(a)+\gamma(a^{2^{m}}),
\end{align*}
this yields 
\begin{align}
vq+s(y+v) & =0~\text{ in }\Z/2\label{eq:g6}\\
2^{m}(xr+ur+xvs+up+1)+q & =\alpha\beta s+\alpha q+s.\label{eq:g7}
\end{align}
Using $\eqref{eq:g4}$, Eq. \eqref{eq:g7} reduces to 
\begin{equation}
2^{m}(xr+xvs+up+1)=2(wq-sz).\label{eq:g8}
\end{equation}

From Lemma \ref{lem:2} and Eq. \eqref{eq:g5}, we obtain $z$ is
even and $q$ is odd. Therefore, we can assume $\gamma(a)=\begin{pmatrix}0\\
1
\end{pmatrix}.$ The Eqs.\eqref{eq:g5}, \eqref{eq:g6} and \eqref{eq:g8} reduce
to 
\begin{align}
(\beta^{2^{m}-1}+\cdots+\beta+1) & =2^{m} ~\text{ in }\Z/{2^{m+1}}\\
v+s(y+v) & =0 ~\text{ in }\Z/2\label{eq:g12}\\
2^{m}(xr+xvs+1) & =2w-2zs ~\text{ in }\Z/{2^{m+1}}.\label{eq:g13}
\end{align}

Modulo $4$ of Eq. \eqref{eq:g13} gives $w\text{ is even.}$ Hence
Eqs \eqref{eq:g1} and \eqref{eq:g4} reduce to 
\begin{align}
4w & =2^{m}uv\\
2s & =2^{m}ur.\label{eq:f2}
\end{align}
Now $4s=0$ implies $s\in\{0,2^{m-1},2^{m},2^{m}+2^{m-1}\}$, and
observe that $\gamma(ba^{2^{m}})=\begin{pmatrix}r\\
s+2^{m}
\end{pmatrix}$, thus using the transformation $(a,b)\mapsto(a,ba^{2^{m}})$, we
can further assume 
\begin{equation}
s\in\{0,2^{m-1}\}.\label{eq:f}
\end{equation}
Since $z$ is even, \eqref{eq:g12} and \eqref{eq:g13} turn into
\begin{align}
v & =0 ~\text{ in }\Z/2\label{eq:f1}\\
2^{m}(xr+1) & =2w  ~\text{ in }\Z/{2^{m+1}}.\label{eq:g14}
\end{align}
\begin{thm}
\label{thm:1} Let $A$ be a non-cyclic MMC brace of size $2^{m+2}$,
where $m\ge3$. Then the brace structure is either of the form 
\begin{align*}
S & =\begin{pmatrix}1 & y\\
2^{m}x & 1+2z
\end{pmatrix},\qquad T=\begin{pmatrix}1 & 0\\
0 & 1+2^{m}(1+x)
\end{pmatrix};\\
\gamma(a) & =\begin{pmatrix}0\\
1
\end{pmatrix},\qquad\qquad\qquad\gamma(b)=\begin{pmatrix}1\\
0
\end{pmatrix};
\end{align*}
\begin{center}
or 
\par\end{center}
\begin{align*}
S & =\begin{pmatrix}1 & 0\\
2^{m}x & 1+2z
\end{pmatrix},\qquad T=\begin{pmatrix}1 & 0\\
2^{m} & 1+2^{m}(1+x)
\end{pmatrix};\\
\gamma(a) & =\begin{pmatrix}0\\
1
\end{pmatrix},\qquad\qquad\qquad\gamma(b)=\begin{pmatrix}1\\
2^{m-1}
\end{pmatrix};
\end{align*}
where $z\in\mathbb{Z}/2^{m}$ is even and $x,y\in\bbZ/2$. Conversely
these two forms define brace structures on the group $(A,\circ)$
of presentation \eqref{p1}. Moreover, these two types of braces are
non-isomorphic. 
\end{thm}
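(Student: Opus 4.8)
The plan is to exploit the normalizations already collected just before the statement --- namely $\gamma(a)=\binom{0}{1}$, $z$ and $w$ even, $v=0$, $s\in\{0,2^m\}$ reduced to $\{0,2^{m-1}\}$, together with the constraints \eqref{eq:g3}, \eqref{eq:f2}, \eqref{eq:g14} --- and split into the two cases $s=0$ and $s=2^{m-1}$, which will turn out to give the first and second family respectively. In the case $s=0$: from \eqref{eq:f2} we get $2s=2^m ur=0$, so no constraint on $u,r$ yet; I would next use \eqref{eq:g14}, $2^m(xr+1)=2w$ in $\Z/2^{m+1}$, to solve for $w$ in terms of $x,r$ (noting $w$ even forces $xr+1$ even, i.e. $xr$ odd, i.e. $x=r=1$, OR $w=2^{m-1}(xr+1)$ --- one must be careful: $2w\equiv 2^m(xr+1)$ means $w\equiv 2^{m-1}(xr+1)\pmod{2^m}$, and since $w$ is even and $m\ge3$ this pins $w$ down). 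Then I would feed these into the remaining relations \eqref{eq:g1}, \eqref{eq:g2}, the relation $xv=yu$, and \eqref{eq:g6}, to kill $u$ (showing $u=0$, hence $T=\binom{1\ \ v}{2^m u\ \ 1+2w}$ has $2^m u=0$) and to identify $1+2w$ as $1+2^m(1+x)$; finally use \eqref{eq:g12}/\eqref{eq:f1} plus $vs=0$ to conclude $v=0$, and use the cocycle relations to pin $\gamma(b)=\binom{r}{0}$ with $r$ odd, hence after the normalization $\gamma(b)=\binom{1}{0}$, exactly the first form. In the case $s=2^{m-1}$: now \eqref{eq:f2} reads $2^m = 2s = 2^m ur$, forcing $ur$ odd, so $u=r=1$; this is what produces the $2^m$ in the lower-left corner of $T$ and the entry $2^{m-1}$ in $\gamma(b)$. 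The same manipulations of \eqref{eq:g1}--\eqref{eq:g8} then give $1+2w=1+2^m(1+x)$ and $y=0$ (the latter because with $s\ne0$ equation \eqref{eq:g12}, $v+s(y+v)=0$, together with $v=0$ forces $sy=0$, and since $s=2^{m-1}$ and we are in $\Z/2$... here one must instead re-examine which equations survive without the $\gamma(a)=\binom01$, $z$-even reductions --- I expect the honest bookkeeping to show $y=0$ is forced), yielding the second form.

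For the converse, I would invoke Proposition~\ref{prop:2}: it suffices to check, for each of the two displayed $(S,T,\gamma(a),\gamma(b))$, that $S^{2^m}=T^2=I$, $TST=S$, and that $\gamma$ extended by \eqref{eq:pp1}--\eqref{eq:pp3} is well-defined (consistent with the relations $a^{2^{m+1}}=1$, $b^2=1$, $bab^{-1}=a^{1+2^m}$) and satisfies \eqref{eq:pp4}, \eqref{eq:pp5}. The identities $S^{2^m}=I$ and $T^2=I$ are immediate from Lemma on $S^n$ (Eq.~\eqref{eq:g11}) and a one-line computation; $TST=S$ is a $2\times2$ matrix check; \eqref{eq:pp5} follows from Lemma~\ref{lem:2} applied to $\beta=1+2z$ with $z$ even (so $\beta\equiv1\pmod4$, giving the sum $\equiv2^m$); and \eqref{eq:pp4} is the observation that $T$ fixes the first coordinate and acts as $1+2^m(1+x)$ on the second, which sends $0\mapsto0$ in the first family and $2^{m-1}\mapsto -2^{m-1}$ in the second. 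Here one must verify $\gamma$ is genuinely a bijection $A\to\Z/2\times\Z/2^{m+1}$: this is where I would list $\gamma(a^i)$ and $\gamma(ba^i)$ explicitly using \eqref{eq:pp1}, \eqref{eq:pp2} and observe the first coordinates separate $\{a^i\}$ from $\{ba^i\}$ while the second coordinates run over $\Z/2^{m+1}$ on each half.

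Finally, for the non-isomorphism claim I would compute a brace invariant distinguishing the two families. The cleanest is the isomorphism type of $(A,+)$ restricted to, say, the subgroup generated by $\gamma(b)$, or more robustly the set $\{x\in A : 2x=0\text{ in }(A,+)\}\cap(\text{image of }b\langle a\rangle)$ --- in the first family $\gamma(b)=\binom10$ has additive order $2$, whereas in the second $\gamma(b)=\binom{1}{2^{m-1}}$ has additive order $4$; since any brace isomorphism is in particular an additive-group isomorphism carrying the $\circ$-structure along, and the element $b$ is characterized $\circ$-theoretically (e.g. as a non-central involution of $(A,\circ)$ lying outside $\langle a\rangle$, or via the coset structure), an isomorphism would have to send such a $b$-type element to a $b$-type element, contradicting the mismatch of additive orders. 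I expect the main obstacle to be precisely the first part --- the case analysis of the converging congruences \eqref{eq:g1}--\eqref{eq:g14} is delicate because several of them were derived \emph{after} partial normalizations, so one has to track carefully which normalizing transformations $(a,b)\mapsto(a,ba^{2^m})$ etc. remain available in each branch and check they do not conflict; in particular showing $y=0$ is forced in the $s=2^{m-1}$ branch while $y$ is free in the $s=0$ branch is the subtle point.
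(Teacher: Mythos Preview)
Your outline follows the paper's proof closely: case-split on $s\in\{0,2^{m-1}\}$ using the accumulated reductions, invoke Proposition~\ref{prop:2} for the converse, and distinguish the two families by the additive order of $\gamma(b)$. On the point you flag as subtle --- forcing $y=0$ when $s=2^{m-1}$ --- the relation you already listed does the work: $xv=yu$ (derived from \eqref{eq:g1}--\eqref{eq:g2}) together with $v=0$ and $u=1$ (the latter forced by \eqref{eq:f2} since $2s=2^m$) gives $y=0$ immediately; there is no need to squeeze anything out of \eqref{eq:g12}. Your non-isomorphism argument via the additive order of the non-central involutions is exactly the paper's.

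The genuine gap is the bijectivity of $\gamma$ in the converse. Your claim that the first coordinates separate $\{a^i\}$ from $\{ba^i\}$ is false in the first family when $y=1$: by the explicit formulas \eqref{eq:gamma-values}, $\gamma(a^{4k+2})$ and $\gamma(a^{4k+3})$ have first coordinate $y=1$, the same as $\gamma(ba^{4\ell})$ and $\gamma(ba^{4\ell+1})$. Even when $y=0$, the assertion that the second coordinates exhaust $\bbZ/2^{m+1}$ is not free: those coordinates are partial sums $\beta^n+\cdots+1$, shifted by $2^m x$ or $2^m xy$ depending on $n\bmod 4$, and ruling out collisions requires Lemma~\ref{lem:3} (a partial sum can be $\equiv 0\pmod{2^m}$ only for $n=2^m-1$, where it equals $2^m$ by Lemma~\ref{lem:2}) together with a residue-by-residue comparison of the $2^m$-shifts --- which is precisely the case analysis the paper carries out. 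Without this, injectivity of $\gamma$ is unproven, and with $y=1$ your proposed argument actually breaks down rather than merely being incomplete.
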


\begin{proof}
From the various relations (above Proposition \ref{prop:2}), we obtain
that $z$ is even and $\gamma(a)$ can be assumed $\begin{pmatrix}0\\
1
\end{pmatrix}$. Furthermore, from Eq. \eqref{eq:f}, we have $s\in\{0,2^{m-1}\}$.
The relations \eqref{eq:g14}, \eqref{eq:f1}, and \eqref{eq:f2},
yield the first form when $s=0$, and the second form when $s=2^{m-1}$.
The first part is complete.

\vspace{0.2cm}

Notice that $\beta=1+2z\in(\bbZ/2^{m+1})^{\times}$ and $|(\bbZ/2^{m+1})^{\times}|=2^{m}$,
therefore $\beta^{2^{m}}=1$. Since $m\ge2$, from Eq.\eqref{eq:g11},
we obtain $S^{2^{m}}=I$. Thus Eq. \eqref{eq:g14} implies $T^{2}=1$
and $T^{-1}S^{-1}T^{-1}=S^{-1}$, therefore from Proposition \ref{prop:2},
\[
a\mapsto S,\qquad b\mapsto T,
\]
define a right action of $(A,\circ)$ on $\bbZ/2\times\mathbb{Z}/2^{m+1}$.

We consider the first form. Using Eq. \eqref{eq:g11}, the relations
\eqref{eq:pp1} and \eqref{eq:pp2}, we obtain, for $k\ge0$:

\begin{align}
\gamma(a^{4k+4}) & =\begin{pmatrix}0\\
\beta^{4k+3}+\cdots+1
\end{pmatrix},\qquad\gamma(a^{4k+1})=\begin{pmatrix}0\\
\beta^{4k}+\cdots+1
\end{pmatrix},\nonumber \\
\gamma(a^{4k+2}) & =\begin{pmatrix}y\\
\beta^{4k+1}+\cdots+1
\end{pmatrix},\qquad\gamma(a^{4k+3})=\begin{pmatrix}y\\
2^{m}xy+\beta^{4k+2}+\cdots+1
\end{pmatrix}\label{eq:gamma-values}\\
\gamma(ba^{4k+4}) & =\begin{pmatrix}1\\
\beta^{4k+3}+\cdots+1
\end{pmatrix},\qquad\gamma(ba^{4k+1})=\begin{pmatrix}1\\
2^{m}x+\beta^{4k}+\cdots+1
\end{pmatrix},\nonumber \\
\gamma(ba^{4k+2}) & =\begin{pmatrix}y+1\\
\beta^{4k+1}+\cdots+1
\end{pmatrix},\qquad\gamma(ba^{4k+3})=\begin{pmatrix}y+1\\
2^{m}x(1+y)+\beta^{4k+2}+\cdots+1
\end{pmatrix}\nonumber 
\end{align}

Clearly in this case, we have relation \eqref{eq:pp4}, and since
$z$ is even, by Lemma \ref{lem:2}, the $\gamma$ satisfies \eqref{eq:pp5},
this implies $\gamma(1)=0$. Now we will verify \eqref{eq:pp3}, it
is clearly true for $i=0$. For $i=4k+1$, $k\ge0$, using the relations
\eqref{eq:gamma-values}, Lemma \ref{lem:2} and the fact $\beta^{2^{m}}=1$,
we have 
\begin{align*}
T\gamma(a^{i})+\gamma(b) & =\begin{pmatrix}1\\
2^{m}x+\beta^{4k}+\cdots+1+2^{m}
\end{pmatrix}\\
\gamma(a^{i}b) & =\gamma(ba^{2^{m}+4k+1})=\begin{pmatrix}1\\
2^{m}x+\beta^{2^{m}+4k}+\cdots+1
\end{pmatrix}\\
 & =\begin{pmatrix}1\\
2^{m}x+\beta^{2^{m}}(\beta^{4k}+\cdots+1)+\beta^{2^{m}-1}+\cdots+1
\end{pmatrix}\\
 & =\begin{pmatrix}1\\
2^{m}x+\beta^{4k}+\cdots+1+2^{m}
\end{pmatrix};
\end{align*}
Similarly, we can obtain $\gamma(a^{i}b)=T\gamma(a^{i})+\gamma(b)$,
by considering $i=4k+2$ and $i=4k+3$. Thus by Proposition \ref{prop:2},
we are only left with showing $\gamma$ is a bijection. Suppose for
some $1\le j\le i\le2^{m+1}-2$ and $u$, we have 
\[
\beta^{i}+\beta^{i-1}+\cdots+1=2^{m}u+\beta^{j}+\cdots+1,
\]
then $\beta^{i-j+1}+\beta^{i-j}+\cdots+1=2^{m}u$. By Lemma \ref{lem:2}
and Lemma \ref{lem:3}, $i-j+1=2^{m}-1$ and $u=1$. Therefore 
\begin{align*}
\text{if } & j\equiv1~(\text{ mod }4),~\text{ then }i\equiv3~(\text{ mod }4)\\
\text{if } & j\equiv2~(\text{ mod }4),~\text{ then }i\equiv0~(\text{ mod }4)\\
\text{if } & j\equiv3~(\text{ mod }4),~\text{ then }i\equiv1~(\text{ mod }4)\\
\text{if } & j\equiv0~(\text{ mod }4),~\text{ then }i\equiv2~(\text{ mod }4).
\end{align*}
Now since $u=1$, 
\[
\gamma(a^{4k_{1}+4})\neq\gamma(a^{4k_{2}+2}),~\gamma(ba^{4k_{1}+4})\neq\gamma(ba^{4k_{2}+2}),\gamma(a^{4k_{1}+4})\neq\gamma(ba^{4k_{2}+2}),~\gamma(ba^{4k_{1}+4})\neq\gamma(a^{4k_{2}+2}).
\]
Now if $\gamma(a^{4k_{1}+1})=\gamma(a^{4k_{2}+3})$ then $y=0$, but
then $u=1$ implies the second coordinate of these elements are not
equal, which is a contradiction, in this way we can show 
\[
\gamma(a^{4k_{1}+1})\neq\gamma(ba^{4k_{2}+3}),~\gamma(b^{4k_{1}+3})\neq\gamma(ba^{4k_{2}+1}),\gamma(ba^{4k_{1}+1})\neq\gamma(ba^{4k_{2}+3}).
\]
Hence, we get $\gamma$ is a bijection.

Similarly, we can show that the second form also defines brace structures.

Now we show that these two type of braces are not isomorphic. Suppose
the braces for $s=0$ and $s=2^{m-1}$ be $A_{1}$ (the cocycle be
$\gamma_{1}$) and $A_{2}$ (cocycle be $\gamma_{2}$) respectively,
and $f:A_{1}\to A_{2}$ be a brace isomorphism. With respect to the
presentation \eqref{p1}, the generators for $(A_{1},\circ)$ be $a_{1},b_{1}$,
and for $(A_{2},\circ)$ be $a_{2},b_{2}$. The elements the in the
group \eqref{p1} of order $2^{m+1}$ are $a^{i}$ or $ba^{i}$ where
$i$ is odd; the elements of order $2$ are $a^{2^{m}},b,\text{ and }ba^{2^{m}}$;
and $a^{2}\in\langle ba\rangle=\langle ba^{i}\rangle$ for all odd
$i$. Thus $f(b_{1})$ is either $b_{2}$ or $b_{2}a_{2}^{2^{m}}$.
But order of $\gamma_{1}(b_{1})$ is $2$, and the order of both $\gamma_{2}(b_{2})$
and $\gamma_{2}(b_{2}a_{2}^{2^{m}})$ is $4$. Which is not possible. 
\end{proof}
From Proposition \ref{prop:subgrps}, we have information of all normal
subgroups of a modular maximal-cyclic group. Now using Theorem \ref{thm:1},
we exhibit all MMC braces of size $2^{m+2}$ ($m\ge3$) by considering
all possible socles.

\subsection{\emph{If $\boldsymbol{\langle ba\rangle=\protect\Soc(A)}$}}

\label{sub1} Here $TS=1$, that is $S=T$. If $s=2^{m-1}$, then
$x=1$ and $2z=0$. Thus in this case 
\begin{align*}
T=S=\begin{pmatrix}1 & 0\\
1 & 1
\end{pmatrix},\qquad\gamma(a)=\begin{pmatrix}0\\
1
\end{pmatrix},\hspace{0.2cm}\gamma(b)=\begin{pmatrix}1\\
2^{m-1}
\end{pmatrix}.
\end{align*}
And if $s=0$, then 
\begin{align*}
T=S=\begin{pmatrix}1 & 0\\
0 & 1+2^{m}
\end{pmatrix},\qquad\gamma(a)=\begin{pmatrix}0\\
1
\end{pmatrix},\hspace{0.2cm}\gamma(b)=\begin{pmatrix}1\\
0
\end{pmatrix}.
\end{align*}
Thus for this case we have two non-isomorphic braces.

\subsection{\emph{If }$\boldsymbol{\langle b,a^{2^{k}}\rangle=\protect\Soc(A)}\text{\emph{\textbf{ for}} }\boldsymbol{k\ge1}$}

\label{sub2} Since $T=1$, $s=0$ and $x=1$. Thus 
\begin{align*}
S & =\begin{pmatrix}1 & y\\
2^{m} & 1+2z
\end{pmatrix},\qquad T=\begin{pmatrix}1 & 0\\
0 & 1
\end{pmatrix};\\
\gamma(a) & =\begin{pmatrix}0\\
1
\end{pmatrix},\qquad\qquad\qquad\gamma(b)=\begin{pmatrix}1\\
0
\end{pmatrix}.
\end{align*}
By Proposition \ref{prop:1}, $1\le k\le m-1$. Since $\mathsf{o}(1+2^{m+1-k})=2^{k}$
in $(\bbZ/2^{m+1})^{\times}$, for each such $k$, there is a brace.
Therefore in this case we have at least $m-1$ distinct braces. 

\subsection{\emph{If }$\boldsymbol{\langle a^{2^{k}}\rangle=\protect\Soc(A)}$
\emph{for} $\boldsymbol{0\protect\leq k\protect\leq m}$}

\label{sub3} In this case $T\neq1$, thus when $s=0$, $x=0$. Hence
\begin{align*}
S & =\begin{pmatrix}1 & y\\
0 & 1+2z
\end{pmatrix},\qquad T=\begin{pmatrix}1 & 0\\
0 & 1+2^{m}
\end{pmatrix}\\
\gamma(a) & =\begin{pmatrix}0\\
1
\end{pmatrix},\qquad\qquad\qquad\gamma(b)=\begin{pmatrix}1\\
0
\end{pmatrix}.
\end{align*}
And when $s=2^{m-1}$ 
\begin{align*}
S & =\begin{pmatrix}1 & 0\\
2^{m}x & 1+2z
\end{pmatrix},\qquad T=\begin{pmatrix}1 & 0\\
2^{m} & 1+2^{m}(1+x)
\end{pmatrix}\\
\gamma(a) & =\begin{pmatrix}0\\
1
\end{pmatrix},\qquad\qquad\qquad\gamma(b)=\begin{pmatrix}1\\
2^{m-1}
\end{pmatrix}.
\end{align*}
Again we have $1\le k\le m-1$. Same reason like above, for each case
we have brace for all such values of $k$, and by Theorem \ref{thm:1},
these braces are non-isomorphic, hence in this case we have at least
$2(m-1)$ distinct braces.

\subsection{\emph{If }$\boldsymbol{\langle a^{2^{k}}b\rangle=\protect\Soc(A)}$
\emph{for} $\boldsymbol{1\le k\le m}$}

\label{sub4} By Propositions \ref{prop:subgrps} and \ref{prop:1},
$k\le m-2$. For $s=2^{m-1}$, 
\begin{align*}
TS^{2^{k}}=\begin{pmatrix}1 & 0\\
2^{m} & \beta^{2^{k}}+2^{m}(1+x)
\end{pmatrix}\neq1.
\end{align*}
Hence $s=0$, since $T\neq1$, $x=0$, and 
\begin{align*}
TS^{2^{k}}=\begin{pmatrix}1 & 0\\
0 & \beta^{2^{k}}+2^{m}
\end{pmatrix}.
\end{align*}
There always exist a $\beta\equiv1~(\text{ mod }4)$ in $(\bbZ/2^{m+1})^{\times}$
such that $\beta^{2^{k}}+2^{m}=1$ and $\beta^{2^{k-1}}+2^{m}\neq1$,
for example $\beta=1+2^{m-k}$. Therefore, we have at least $m-2$
distinct such braces.

\vspace{0.3cm}

Suppose $f:A_{1}\to A_{2}$ be a brace isomorphism, where $A_{1},A_{2}$
are MMC braces, if $f(a_{1})=a_{2}^{i}$ where $i$ is odd, then the
fact $f(\Soc(A_{1}))=\Soc(A_{2})$ implies all the mentioned braces
are distinct. We can assume $f(a_{1})=b_{2}a_{2}$, then $f(a_{1}^{2k})\in\{a_{2}^{2k},a_{2}^{2^{m}+2k}\}$
for all $k\ge1$. Therefore, the braces obtained in Subsections \ref{sub2}--\ref{sub4}
are distinct. Hence we obtain 
\begin{cor}
\label{cor:final} Number of non-cyclic MMC braces of size $2^{m+2}$
with $m\ge3$ is at least $4m-5$. 
\end{cor}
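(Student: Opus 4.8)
The plan is to read off Corollary \ref{cor:final} from the constructions of Subsections \ref{sub1}--\ref{sub4}, the only additional work being to certify that the braces produced there with distinct parameters are pairwise non-isomorphic. First I would recall the organizing principle: for a non-cyclic MMC brace $A$ of size $2^{m+2}$ the socle $\Soc(A)$ is a non-trivial ideal of $A$, hence a non-trivial normal subgroup of $(A,\circ)\cong M_{2^{m+2}}$, so Proposition \ref{prop:subgrps} forces $\Soc(A)$ to be one of $\langle ba\rangle$, $\langle b,a^{2^k}\rangle$, $\langle a^{2^k}\rangle$ or $\langle ba^{2^k}\rangle$ (the subgroups $\langle b\rangle$ and $\langle ba^{2^m}\rangle$ are excluded, being non-normal). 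Sorting the non-cyclic MMC braces according to which of these their socle equals gives exactly the four cases handled in Subsections \ref{sub1}--\ref{sub4}, where one produces, respectively, at least $2$, $m-1$, $2(m-1)$ and $m-2$ braces, parametrized by the level $k$ of the socle together with the choice $s\in\{0,2^{m-1}\}$ of Theorem \ref{thm:1} wherever both values are admissible.

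Next I would assemble the \emph{non-isomorphism} statements. A brace isomorphism $f\colon A_1\to A_2$ restricts to a group isomorphism $\Soc(A_1)\to\Soc(A_2)$, so braces from different subsections are separated as soon as the corresponding socles differ in order or in abstract isomorphism type: the socle of a Subsection \ref{sub2} brace is $\cong\bbZ/2\times\bbZ/2^{m+1-k}$ of order $2^{m+2-k}$, hence non-cyclic, while those of Subsections \ref{sub1}, \ref{sub3} and \ref{sub4} are cyclic; and within any one subsection distinct values of $k$ give socles of distinct orders. The only real collision is between a Subsection \ref{sub3} brace (socle $\langle a^{2^k}\rangle$) and a Subsection \ref{sub4} brace (socle $\langle ba^{2^k}\rangle$) with a common $k$, since both socles are then cyclic of order $2^{m+1-k}$. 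To separate these I would use that $f$ must send a generator of one of the cyclic maximal subgroups to an element of order $2^{m+1}$, i.e.\ $f(a_1)=a_2^{i}$ or $f(a_1)=b_2a_2^{i}$ with $i$ odd, and that in either case $f\bigl(\langle a_1^{2}\rangle\bigr)=\langle a_2^{2}\rangle\subseteq\langle a_2\rangle$ --- the non-obvious half following from $(b_2a_2^{i})^{2}=a_2^{\,i(2+2^{m})}$ together with the fact that $i(1+2^{m-1})$ is odd. Since $\langle a_1^{2^k}\rangle\subseteq\langle a_1^{2}\rangle$ for $k\ge1$, this puts $f(\Soc(A_1))$ inside $\langle a_2\rangle$, contradicting $\Soc(A_2)=\langle b_2a_2^{2^k}\rangle\not\subseteq\langle a_2\rangle$; non-isomorphism being symmetric, this settles the case in both directions. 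Finally, the two braces with a common socle level $k$ inside Subsection \ref{sub3} are non-isomorphic by the last assertion of Theorem \ref{thm:1}, since $\gamma(b)$ has order $2$ when $s=0$ and order $4$ when $s=2^{m-1}$.

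Summing the counts from Subsections \ref{sub2}--\ref{sub4} alone then yields at least $(m-1)+2(m-1)+(m-2)=4m-5$ pairwise non-isomorphic non-cyclic MMC braces of size $2^{m+2}$ (the two braces of Subsection \ref{sub1}, whose socle has the extremal order $2^{m+1}$, are distinct from all of these and only improve the bound). The step I expect to be the main obstacle is precisely this cross-case non-isomorphism in the borderline situation where the socles agree as abstract groups: one must rule out an isomorphism that interchanges the ``rotation'' generator $a$ with a ``reflection-type'' generator $ba^{2^k}$, and the decisive input is that every brace isomorphism respects the subgroup $\langle a^{2}\rangle$ --- the subgroup of the adjoint group generated by the squares of its elements of order $2^{m+1}$ --- after which the remainder is the bookkeeping already carried out in Subsections \ref{sub1}--\ref{sub4}.
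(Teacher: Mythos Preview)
Your proposal is correct and follows essentially the same approach as the paper. Both arguments hinge on the fact that any brace isomorphism $f$ sends $\Soc(A_1)$ onto $\Soc(A_2)$ and, more specifically, maps $\langle a_1^{2}\rangle$ onto $\langle a_2^{2}\rangle$; the paper establishes the latter by reducing to $f(a_1)=b_2a_2$ and computing $f(a_1^{2k})\in\{a_2^{2k},a_2^{2k+2^{m}}\}$, while you do the equivalent calculation $(b_2a_2^{i})^{2}=a_2^{\,i(2+2^{m})}$ for general odd $i$. Your added observation that the Subsection \ref{sub2} socles are non-cyclic whereas the others are cyclic is a clean way to isolate the only genuine collision (Subsection \ref{sub3} versus Subsection \ref{sub4} at the same level $k$), after which both proofs finish identically by summing $(m-1)+2(m-1)+(m-2)=4m-5$.
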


\begin{rem}
A remarkable fact is that the number of braces associated with dihedral,
semidihedral and generalized quaternion groups stabilizes with increasing
order. We have proved that this pattern does not follow for the modular
maximal-cyclic group case. 
\end{rem}

\section*{Acknowledgment}

The second author is grateful to Professor Wolfgang Rump for introducing
this problem.

\bibliographystyle{amsplain}
\bibliography{metacyclic_manuscript}

\end{document}